\newcommand{\excise}[1]{}
\newtheorem{thm}{Theorem}[section]
\newtheorem{lemma}[thm]{Lemma}
\newtheorem{cor}[thm]{Corollary}
\newtheorem{prop}[thm]{Proposition}
\newtheorem{conj}[thm]{Conjecture}
\newtheorem{prob}[thm]{Problem}
\theoremstyle{definition}
\newtheorem{alg}[thm]{Algorithm}
\newtheorem{example}[thm]{Example}
\newtheorem{remark}[thm]{Remark}
\newtheorem{defn}[thm]{Definition}
\newtheorem{notation}[thm]{Notation}
\numberwithin{equation}{section}
\renewcommand\>{\rangle}
\newcommand\<{\langle}
\newcommand\CC{\mathbb{C}}
\newcommand\RR{\mathbb{R}}
\newcommand\ZZ{\mathbb{Z}}
\newcommand\KV{\mathrm{KV}}
\def\bar#1{{\overline {#1}}}
\DeclareMathOperator\Aut{Aut} 
\DeclareMathOperator\vol{vol} 
\DeclareMathOperator\Ap{Ap} 
\begin{document}

\mbox{}
\title[Numerical semigroups, polyhedra, and posets I]{Numerical semigroups, polyhedra, and posets I:\ \\ the group cone}

\author[Kaplan]{Nathan Kaplan}
\address{Mathematics Department\\University of California Irvine\\Irvine, CA 92697}
\email{nckaplan@math.uci.edu}

\author[O'Neill]{Christopher O'Neill}
\address{Mathematics Department\\San Diego State University\\San Diego, CA 92182}
\email{cdoneill@sdsu.edu}

\date{\today}

\begin{abstract}
Several recent papers have explored families of rational polyhedra whose integer points are in bijection with certain families of numerical semigroups.  One such family, first introduced by Kunz, has integer points in bijection with numerical semigroups of fixed multiplicity, and another, introduced by Hellus and Waldi, has integer points corresponding to oversemigroups of numerical semigroups with two generators.  In this paper, we provide a combinatorial framework from which to study both families of polyhedra.  We introduce a new family of polyhedra called group cones, each constructed from some finite abelian group, from which both of the aforementioned families of polyhedra are directly determined but that are more natural to study from a standpoint of polyhedral geometry.  We prove that the faces of group cones are naturally indexed by a family of finite posets, 
and illustrate how this combinatorial data relates to semigroups living in the corresponding faces of the other two families of polyhedra.  
\end{abstract}

\maketitle


\section{Introduction}
\label{sec:intro}

Let $\ZZ_{\ge 0}$ denote the set of nonnegative integers.  A \emph{numerical semigroup} $S$ is a subset of $\ZZ_{\ge 0}$ that contains 0, is closed under addition, and has finite complement in~$\ZZ_{\ge 0}$ (the~final condition is equivalent to requiring the greatest common divisor of the elements of $S$ is $1$).  We often specify a numerical semigroup via \emph{generators}, writing
$$S = \<n_1, \ldots, n_k\> = \{a_1n_1 + \cdots + a_kn_k : a_1, \ldots, a_k \in \ZZ_{\ge 0}\}$$
for the numerical semigroup generated by $n_1, \ldots, n_k$.  It is known that any numerical semigroup has a unique generating set that is minimal with respect to containment; its cardinality is known as the \emph{embedding dimension} of $S$.  
The \emph{set of gaps} of~$S$, denoted~$G(S)$, is the finite set $\ZZ_{\ge 0} \setminus S$.  The~cardinality of this set is the \emph{genus}~$g(S)$ of~$S$.  The smallest positive element of $S$ is called the \emph{multiplicity} of $S$, denoted $\mathsf m(S)$.  

To motivate the contents of this manuscript, we survey two counting problems involving numerical semigroups.  Each problem can be realized as an integer point counting problem in some family of rational polyhedra.  One of the primary insights of this manuscript is to identify a strong combinatorial connection between these polyhedra.  


\subsection{Counting by multiplicity and genus}
There has been much recent interest in counting numerical semigroups by genus; see the survey article \cite{kaplancounting} for an overview of problems and results in this area.  Let $N(g)$ denote the number of numerical semigroups $S$ with $g(S) = g$.  Bras-Amor\'os computed the first $50$ values of $N(g)$ and made several conjectures about the behavior of this function~\cite{brasamorosfibonacci}.  Zhai determined the asymptotic growth rate of $N(g)$, thereby proving $N(g) \le N(g+1)$ for~$g$ sufficiently large~\cite{zhaifibonacci}.  However, the following conjecture remains open, as the smallest upper bound for possible exceptions is well out of range of computation~\cite[Section~6]{ODorney}.

\begin{conj}[Bras-Am\'oros]\label{conj:conjNg}
For all $g \ge 1$, we have $N(g) \le N(g+1)$.
\end{conj}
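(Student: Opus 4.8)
The plan is to prove the inequality by exhibiting an injection from the set of numerical semigroups of genus $g$ into the set of those of genus $g+1$. The natural candidate is the one underlying the \emph{semigroup tree}: if $a$ is a minimal generator of $S$ with $a > F(S)$, where $F(S) = \max(\ZZ_{\ge 0}\setminus S)$ is the Frobenius number (such an $a$ is called an \emph{effective generator}), then $S \setminus \{a\}$ is again a numerical semigroup, of genus $g(S)+1$, and $F(S\setminus\{a\}) = a$; consequently $S = (S\setminus\{a\}) \cup \{F(S\setminus\{a\})\}$ is recoverable from $S\setminus\{a\}$, so for any fixed rule selecting an effective generator $a(S)$ the assignment $S \mapsto S \setminus\{a(S)\}$ is \emph{injective} on the set of genus-$g$ semigroups that have at least one effective generator. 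In the language of this paper, if $S$ has multiplicity $m$ with Kunz coordinates $(x_1,\dots,x_{m-1})$ inside the Kunz polyhedron $P_m$ — so that $x_j$ counts the gaps of $S$ in residue class $j$ and $g(S) = x_1 + \cdots + x_{m-1}$ — then removing the minimal generator $w_j$ lying in a class $j \in \{1,\dots,m-1\}$ increments $x_j$ by one (and removing $\mathsf m(S)$, when it is an effective generator, passes to $P_{m+1}$); this increment lands in $P_m$ exactly because $x_j$ occurs on the right side of no tight Kunz inequality, which is the polyhedral shadow of $w_j$ being a minimal generator of $S$.

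The injection fails to be total for a single reason: \emph{leaves}, meaning genus-$g$ semigroups all of whose minimal generators are $\le F(S)$. The next step is the bookkeeping that isolates their effect. Every numerical semigroup $T$ of genus $g+1$ has the unique parent $T\cup\{F(T)\}$ of genus $g$, of which $T$ is a child via the effective generator $F(T)$; writing $c(S)$ for the number of effective generators of $S$, this gives
\[
N(g+1) \;=\; \sum_{g(S)=g} c(S), \qquad\text{hence}\qquad N(g+1)-N(g) \;=\; \sum_{g(S)=g}\bigl(c(S)-1\bigr).
\]
Leaves contribute $-1$ to the last sum while semigroups with $c(S)=1$ contribute $0$, so Conjecture~\ref{conj:conjNg} is equivalent to the statement that the ``multiply branching'' semigroups absorb the leaves:
\[
\sum_{\substack{g(S)=g\\ c(S)\ge 2}} \bigl(c(S)-1\bigr) \;\ge\; \#\{\, S : g(S)=g,\ c(S)=0 \,\}.
\]
Thus, for each fixed $g$, one is reduced to a counting inequality between leaves and semigroups with several effective generators.

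This is where the combinatorial framework of the paper should enter. The set of minimal generators of $S$ is constant along the relative interior of each face of $P_m$ (equivalently, of the group cone) and is read off from which Kunz inequalities are tight; whether such a generator is effective depends in addition on the comparison of the corresponding Ap\'ery element with $F(S)$, which refines the face data by the position of the lattice point within its face. Using the poset indexing of faces developed later in the paper, one would hope to stratify genus-$g$ semigroups by multiplicity and then bound, face by face and residue class by residue class, the number of leaves against the number of lattice points with $c(S)\ge2$, turning the conjecture into a structured lattice-point inequality for each relevant pair $(m,g)$. The main obstacle — and the reason Conjecture~\ref{conj:conjNg} remains open — is to make such bounds \emph{uniform in $g$}: Zhai's asymptotic analysis already gives the inequality for all sufficiently large $g$ (the density of leaves tends to $0$ while the branching excess grows), and direct enumeration disposes of small $g$, but no monotone or inductive structure is currently known that makes the face-wise estimates add up for every value of $g$ at once. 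I expect that converting the polyhedral combinatorics into an effective, $g$-uniform lower bound for $\sum_{c(S)\ge 2}(c(S)-1)$ minus the number of genus-$g$ leaves is the crux, and the chief difficulty, of any proof along these lines.
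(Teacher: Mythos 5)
The statement you were asked to prove is Conjecture~\ref{conj:conjNg}, which the paper explicitly presents as \emph{open} (it offers no proof, only the remark that the smallest bound on possible exceptions is far beyond computation), and your proposal does not close it either. Your preliminary steps are correct and standard: removing an effective generator $a > F(S)$ gives an injective ``child'' map, every genus-$(g+1)$ semigroup $T$ has the unique parent $T \cup \{F(T)\}$, hence $N(g+1) = \sum_{g(S)=g} c(S)$, and the conjecture is therefore equivalent to the inequality you display, namely that the branching excess $\sum_{c(S)\ge 2}(c(S)-1)$ at genus $g$ dominates the number of leaves ($c(S)=0$) at genus $g$. But this equivalence is only a reformulation; the entire content of the conjecture now sits in that inequality, and your proposal supplies no mechanism for proving it. The appeal to the Kunz polyhedron and the face/poset stratification of the group cone is speculative: the paper's results (Theorems~\ref{t:facetdesc}, \ref{t:kunzfaces}, \ref{t:oversemigroupfaces}) describe which Kunz inequalities are tight on a face, i.e.\ the Kunz poset of semigroups in that face, but they say nothing about which generators exceed the Frobenius number, nor do they yield any count of leaves versus multiply-branching lattice points on a face, let alone a bound uniform in $g$.

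Your closing paragraph concedes exactly this: Zhai's result gives the inequality only for all sufficiently large $g$ with no effective threshold that computation can meet, so ``large $g$ plus small-$g$ enumeration'' does not assemble into a proof, and no $g$-uniform estimate is known. So the genuine gap is not a technical slip but the absence of the main idea: you have correctly reduced Conjecture~\ref{conj:conjNg} to an equivalent open counting problem and then stopped, which matches the status of the statement in the paper (a conjecture, not a theorem) but does not constitute a proof.
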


One approach to Conjecture \ref{conj:conjNg} is to study a more refined counting problem.  Let $N_m(g)$ denote the number of numerical semigroups $S$ with $\mathsf m(S) = m$ and $g(S) = g$. See \cite[Table~1]{kaplannmg} for some values of $N_m(g)$.  

\begin{conj}[{\cite[Conjecture~7]{kaplannmg}}]\label{conj:nmgnondec}
For any $m \ge 2$ and $g \ge 1$, $N_m(g) \le N_m(g+1)$.
\end{conj}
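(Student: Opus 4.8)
The plan is to recast $N_m(g)$ as a lattice-point count on a hyperplane slice and then to look for a genus-raising injection. Specializing the group-cone construction of this paper to $G=\ZZ/m\ZZ$ produces the Kunz polyhedron $\KK_m\subseteq\RR^{m-1}$, whose lattice points $\aa=(a_1,\dots,a_{m-1})$ are in bijection with the numerical semigroups $S$ of multiplicity $m$, the bijection sending $S$ to the tuple determined by $w_i=ma_i+i$ where $\{0,w_1,\dots,w_{m-1}\}$ is the Ap\'ery set of $S$ relative to $m$; concretely $\KK_m$ is cut out by $a_i\ge 1$ together with $a_i+a_j\ge a_{i+j}$ for $i+j<m$ and $a_i+a_j+1\ge a_{i+j-m}$ for $i+j>m$. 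Under this bijection the genus is the linear functional $g(S)=a_1+\dots+a_{m-1}$, so
\[
N_m(g)=\#\bigl(\KK_m\cap\{\aa:a_1+\dots+a_{m-1}=g\}\cap\ZZ^{m-1}\bigr),
\]
and Conjecture~\ref{conj:nmgnondec} is precisely the assertion that these slice counts are non-decreasing in $g$.

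The first thing I would attempt is an injection $\Phi_g$ from the genus-$g$ slice into the genus-$(g+1)$ slice built from the elementary move $\aa\mapsto\aa+\ee_k$ for a suitably chosen index $k=k(\aa)\in\{1,\dots,m-1\}$, which raises $a_1+\dots+a_{m-1}$ by exactly one. A short inspection of the Kunz inequalities shows that this move returns to $\KK_m$ precisely when every inequality carrying $a_k$ alone on the right-hand side holds strictly, and this is exactly the condition that $w_k$ be a minimal generator of $S$; in semigroup terms $\Phi_g(S)=S\setminus\{w_k\}$, which again has multiplicity $m$ because $k\ne 0$. Since every numerical semigroup of multiplicity $m\ge 2$ has at least two minimal generators, and hence at least one different from $m$, such a $k$ always exists, and fixing a deterministic rule (say, $k$ minimal among the legal indices) makes $\Phi_g$ a well-defined map into the genus-$(g+1)$ slice.

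The hard part is injectivity of $\Phi_g$, and this is where the difficulty of the conjecture is concentrated: summing $N_m(g)\le N_m(g+1)$ over $m$ would recover the still-open Conjecture~\ref{conj:conjNg}, so no easy argument can succeed. A collision $\Phi_g(S)=\Phi_g(S')=T$ forces $T$ to contain two distinct gaps $w_{k(S)}\ne w_{k(S')}$, each adjoinable to $T$ to produce a multiplicity-$m$ semigroup of genus $g$, with the deterministic rule unluckily stripping off exactly these two; equivalently, in the subtree $\TT_m$ of the tree of numerical semigroups whose nodes are the multiplicity-$m$ semigroups (rooted at $\langle m,m+1,\dots,2m-1\rangle$ at depth $m-1$, with the children of $S$ obtained by deleting those minimal generators of $S$ that exceed the Frobenius number and are $\ne m$), one must control the leaves against the branching of the non-leaves level by level. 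A more structured route, and the one the machinery of this paper is designed to support, is to stratify the count: by our main theorem the faces of $\KK_m$ are indexed by posets, for a fixed poset $P$ the multiplicity-$m$ semigroups of genus $g$ lying in the relative interior of the face $F_P$ are enumerated by a function $E_P(g)$ that agrees for large $g$ with an Ehrhart quasi-polynomial, and $N_m(g)=\sum_P E_P(g)$; one then wants $\sum_P E_P(g)\le\sum_P E_P(g+1)$. Proving this — whether by producing for each $P$ a genus-raising injection adapted to $F_P$ and reconciling the pieces through inclusion–exclusion over the poset of faces, or by extracting enough monotonicity of the individual $E_P$ to force the inequality after summation — is where essentially all of the work lies, and it has so far been verified only for small values of $m$.
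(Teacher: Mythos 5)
The statement you were asked about is not proved in the paper at all: it is stated as an open conjecture (quoted from \cite[Conjecture~7]{kaplannmg}), and the surrounding discussion makes clear that even the coarser Conjecture~\ref{conj:conjNg} remains out of reach. So there is no proof in the paper to compare against, and your submission does not supply one either. Your setup is correct and consistent with the paper's framework: the identification of $N_m(g)$ with the lattice-point count on the slice $\{a_1+\cdots+a_{m-1}=g\}$ of the (strict) Kunz polyhedron is exactly Theorem~\ref{t:kunzbijection} together with the fact that the genus is the coordinate sum of $\KV_m(S)$, and your observation that $\aa\mapsto\aa+\ee_k$ stays in the polyhedron precisely when the inequalities with $a_k$ on the right are strict, i.e.\ when $w_k$ is a minimal generator $\ne m$, is a correct translation of removing a minimal generator from $S$ (which preserves multiplicity and raises genus by one). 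All of this gives a well-defined map $\Phi_g$ from the genus-$g$ slice to the genus-$(g+1)$ slice.

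The genuine gap is that injectivity of $\Phi_g$ --- which is the entire content of the conjecture --- is never argued. A deterministic choice of $k$ (e.g.\ the minimal legal index) does not by itself rule out collisions: two distinct semigroups $S\ne S'$ of genus $g$ can both map to the same $T$ of genus $g+1$ whenever $T$ admits two distinct gaps whose adjunction gives multiplicity-$m$ semigroups and the selection rule happens to strip exactly those generators, and you give no mechanism (matching argument, sign-reversing involution, compensating surplus of non-leaf branching in the multiplicity-$m$ tree, etc.) to exclude or absorb such collisions. The alternative stratification you sketch has the same problem one level down: the face-by-face counts $E_P(g)$ (points in the relative interior of a face $F_P$) are not individually monotone in general, the Ehrhart quasipolynomial description only controls large $g$, and no argument is offered for why the sum $\sum_P E_P(g)$ should be non-decreasing. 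As your own remark about summing over $m$ recovering Conjecture~\ref{conj:conjNg} shows, any argument closing this gap would resolve a well-known open problem; as written, the proposal is a correct reformulation plus a program, not a proof.
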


In order to state what is known about $N_m(g)$ we introduce some additional notation.  A \emph{quasipolynomial} of degree $d$ is a function $f \colon \ZZ_{\ge 0} \to \CC$ of the form
\[
f(n) = c_d(n) n^d + c_{d-1}(n) n^{d-1} + \cdots + c_0(n)
\]
with periodic functions $c_i$ having integer periods, $c_d \neq 0$.  The function $c_d$ is called the \emph{leading coefficient} of $f$.  

\begin{thm}[{\cite[Proposition~7]{kaplannmg}, \cite[Theorem~4]{alhajjarkunz}}]\label{t:nmgquasi}
Fix $m \in \ZZ_{\ge 2}$.
\begin{enumerate}[(a)]
\item 
There is a quasipolynomial $p_m(g)$ of degree $m-2$ such that $N_m(g) = p_m(g)$ for all sufficiently large $g$.  

\item 
The leading coefficient of $p_m(g)$ is constant.

\end{enumerate}
\end{thm}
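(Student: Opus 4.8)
The plan is to realize $N_m(g)$ as a lattice-point count in a slice of the \emph{Kunz polyhedron} and then invoke Ehrhart-type theory. Recall from the classical work of Kunz that numerical semigroups $S$ with $\mathsf m(S)=m$ are in bijection with the integer points of a rational polyhedron $P_m\subseteq\RR^{m-1}$: if $w_i$ denotes the least element of $S$ in the residue class $i$ modulo $m$, then the \emph{Kunz coordinates} $x_i:=(w_i-i)/m$, for $i=1,\dots,m-1$, satisfy $x_i\ge 1$ and $x_i+x_j\ge x_{\overline{i+j}}$ (with the convention $x_0=0$ and a correction term $+1$ on the left whenever $i+j>m$), and conversely every integer solution of this system arises from a unique such $S$. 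The crucial point is that $g(S)=x_1+\cdots+x_{m-1}$, since the gaps of $S$ in the class $i$ are exactly $i,\,i+m,\,\dots,\,i+(x_i-1)m$. Writing $\ell(x)=x_1+\cdots+x_{m-1}$ and $H_g=\{x:\ell(x)=g\}$, we therefore have $N_m(g)=\#\bigl(P_m\cap\ZZ^{m-1}\cap H_g\bigr)$.

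First I would verify that $P_m$ is full-dimensional in $\RR^{m-1}$: the diagonal point $(c,\dots,c)$ lies in the interior of $P_m$ for $c$ large, and it can be perturbed in all coordinate directions. Consequently, for $g$ large the slice $\Delta_g:=P_m\cap H_g$ is a rational polytope of dimension exactly $m-2$ (its coordinates obey $1\le x_i\le g-(m-2)$, so it is bounded). Part (a) then follows from the theory of lattice-point enumeration in parallel integer slices of a rational polyhedron: the series $\sum_{g\ge 0}N_m(g)\,z^g=\sum_{x\in P_m\cap\ZZ^{m-1}}z^{\ell(x)}$ converges (every lattice point of $P_m$ has $\ell(x)\ge m-1$, and each value of $\ell$ is attained only finitely often), and decomposing $P_m$ into simplicial cones in the manner of Brion and Barvinok expresses it as a rational function of $z$ whose poles lie at roots of unity. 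Since $\ell$ is strictly positive on the recession cone $C_m$ of $P_m$ away from the origin — that cone lies in the nonnegative orthant — the pole at $z=1$ has order $\dim P_m=m-1$, so $N_m(g)$ agrees, for $g$ sufficiently large, with a quasipolynomial $p_m(g)$ of degree $m-2$.

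For part (b) I would compute the leading coefficient through volume asymptotics rather than a residue calculation. Normalize volume in the affine hyperplane $H_g$ so that a fundamental domain for the lattice $\ZZ^{m-1}\cap H_g$ (a translate of the fixed lattice $\ZZ^{m-1}\cap H_0$) has volume $1$; the standard lattice-point estimate then gives $\#(\Delta_g\cap\ZZ^{m-1})=\vol(\Delta_g)+o(g^{m-2})$, the error being controlled by the $(m-3)$-dimensional boundary measure of $\Delta_g$, which grows like $g^{m-3}$. Since the recession cone of $P_m$ is $C_m$, one has $\tfrac1g\Delta_g\to C_m\cap H_1$ in the Hausdorff sense as $g\to\infty$, so by continuity of volume $\vol(\Delta_g)=\vol(C_m\cap H_1)\,g^{m-2}+o(g^{m-2})$. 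Combining, $N_m(g)=\vol(C_m\cap H_1)\,g^{m-2}+o(g^{m-2})$; matching this against the quasipolynomial $p_m(g)$ from part (a), whose leading coefficient is a bounded periodic function, forces that leading coefficient to equal the constant $\vol(C_m\cap H_1)$. (This constant is the quantity whose evaluation occupies the Alhajjar--Kunz analysis of the Kunz cone.)

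The main obstacle is the input to part (a): unlike Ehrhart's theorem for a polytope, the counting function for parallel integer slices of an \emph{unbounded} polyhedron is only \emph{eventually} quasipolynomial, so care is needed both in citing or proving this and in pinning the eventual degree to be exactly $m-2$ rather than smaller — this last point is precisely where full-dimensionality of $P_m$ (equivalently, transversality of $H_g$ to $P_m$) enters. A secondary subtlety is that other roots of unity could a priori be poles of $\sum_g N_m(g)z^g$ of the same maximal order $m-1$, which would make the leading coefficient genuinely periodic; the volume asymptotic above circumvents this by showing directly that any such contributions cancel.
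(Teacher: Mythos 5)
Your proposal is correct and takes essentially the approach the paper attributes to its cited sources: the bijection via Kunz coordinates onto integer points of the (strict) Kunz polyhedron with genus equal to the coordinate sum, followed by Ehrhart-type lattice-point counting in the hyperplane slices, with the constant leading coefficient identified as the relative volume of the slice of the recession cone, which for $m\ge 3$ coincides with $\mathcal C(\ZZ_m)\cap\{x_1+\cdots+x_{m-1}=1\}$ as in Theorem~\ref{t:leadingcoefvolumes}. Only two cosmetic points: what you call $P_m$ is the paper's strict Kunz polyhedron $P_m'$, and pinning the degree at exactly $m-2$ requires noting that the recession cone is full-dimensional (the diagonal direction satisfies every inequality strictly), so that $\vol(C_m\cap H_1)>0$ — a detail your full-dimensionality remark already essentially supplies.
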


The proof of Theorem~\ref{t:nmgquasi} uses Ehrhart theory and a bijection between numerical semigroups of multiplicity $m$ and certain integer points in a rational polyhedron $P_m$, called the \emph{Kunz polyhedron} (we defer the formal definition to Section~\ref{sec:kunz}).  In particular, this yields a geometric interpretation of the leading coefficient of $p_m(g)$.  Additionally, the face structure of~$P_m$ was studied in~\cite{wilfmultiplicity} to provide a new approach to a longstanding conjecture of Wilf~\cite{wilfconjecture}.  


\subsection{Counting oversemigroups}
Let $S$ be a numerical semigroup.  An \emph{oversemigroup} of $S$ is a numerical semigroup $T$ with $T \supseteq S$.  Let $o(S)$ denote the number of oversemigroups of $S$.  Since $G(S)$ is a finite set, and any numerical semigroup $T \supseteq S$ has $G(T) \subseteq G(S)$ and is determined by its set of gaps, it is clear that $o(S)$ is finite.

Hellus and Waldi study the function $o(S)$ in the case $S = \<n, q\>$ is the smallest semigroup containing $n, q$ and $\gcd(n,q) = 1$.  For simplicity, write $o(n,q) = o(\<n, q\>)$.  
\begin{thm}[{\cite[Theorem 1.1]{oversemigroupcone}}]\label{t:HWthm}
Let $n \ge 2$ be a positive integer.

\begin{enumerate}[(a)]
\item 
There is a quasipolynomial $H_n(q)$ of degree $n - 1$ taking the value $o(n,q)$ at each positive integer $q$ relatively prime to $n$.

\item 
The leading coefficient $\lambda(n)$ of $H_n(q)$ is constant and
\[
\frac{1}{(n-1)! \cdot n!} \le \lambda(n) \le \frac{1}{(n-1) \cdot n!}.
\]

\end{enumerate}
\end{thm}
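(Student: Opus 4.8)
The plan is to realize $o(n,q)$ as the number of lattice points in a dilating rational polytope, invoke Ehrhart theory, and read off the leading coefficient as a volume.

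\emph{From oversemigroups to lattice points.}
For a numerical semigroup $T$ with $n\in T$, the Ap\'ery set $\Ap(T,n)=\{0,w_1,\dots,w_{n-1}\}$, where $w_i$ is the least element of $T$ congruent to $i$ modulo $n$, determines $T$, and a tuple $(w_1,\dots,w_{n-1})$ arises this way exactly when $w_i\equiv i\pmod n$, $w_i>0$, and $w_i+w_j\ge w_{(i+j)\bmod n}$ for all $i,j$ (with $w_0=0$; these last are Kunz's conditions). Because $n\in T$, one has $\langle n,q\rangle\subseteq T$ iff $q\in T$ iff $w_{j_0}\le q$, where $j_0\in\{1,\dots,n-1\}$ is the residue of $q$. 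Writing $L=\{w\in\ZZ^{n-1}:w_i\equiv i\pmod n\}$ (a coset of $n\ZZ^{n-1}$) and, for a unit $u\in(\ZZ/n)^{\times}$,
\[
\mathcal R_n^{(u)}=\bigl\{\,w\in\RR^{n-1}_{\ge 0}\ :\ w_i+w_j\ge w_{(i+j)\bmod n}\ \text{for all }i,j,\ \ w_u\le 1\,\bigr\},
\]
one obtains $o(n,q)=\#\bigl(L\cap q\,\mathcal R_n^{(j_0)}\bigr)$ for every $q$ coprime to $n$; the congruence form of Kunz's conditions on the diagonal $i+j\equiv 0$ is automatic for points of $L$, so it may be dropped. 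The polytope $\mathcal R_n^{(u)}$ is a bounded rational polytope of dimension $n-1$: boundedness follows by chaining $w_a+w_b\ge w_{(a+b)\bmod n}$ along $u,2u,3u,\dots$ (which exhausts $\ZZ/n$ since $\gcd(u,n)=1$) to get $w_i\le n-1$ for all $i$, and full-dimensionality because $w_i=i^{1/2}$ is a strictly subadditive positive point of the defining cone.

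\emph{Quasipolynomiality and constancy of the leading coefficient.}
By Ehrhart theory for the parametric rational polytope $q\,\mathcal R_n^{(j_0)}$ with respect to $L$ (whose covolume is $[\ZZ^{n-1}:n\ZZ^{n-1}]=n^{n-1}$), for $q$ in each fixed residue class modulo $n$ the count $\#(L\cap q\,\mathcal R_n^{(j_0)})$ agrees with a quasipolynomial in $q$ of degree $\dim\mathcal R_n^{(j_0)}=n-1$; these assemble into a quasipolynomial $H_n$ with $H_n(q)=o(n,q)$ for all $q$ coprime to $n$, which is (a). The leading coefficient of the piece for $j_0$ equals $\vol(\mathcal R_n^{(j_0)})/n^{n-1}$. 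Multiplication by $j_0^{-1}$ on $\ZZ/n$ induces a coordinate permutation of $\RR^{n-1}$ preserving $\RR^{n-1}_{\ge 0}$ and the subadditive cone and carrying $\{w_{j_0}\le 1\}$ to $\{w_1\le 1\}$; being a permutation it is volume-preserving, so $\vol(\mathcal R_n^{(j_0)})=\vol(\mathcal R_n^{(1)})=:V_n$ independently of $j_0$, and $H_n$ has constant leading coefficient $\lambda(n)=V_n/n^{n-1}$.

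\emph{The volume bounds.}
It remains to establish $\tfrac{n^{n-1}}{(n-1)!\,n!}\le V_n\le\tfrac{n^{n-1}}{(n-1)\,n!}$, i.e.\ $\tfrac{n^{n-1}}{(n-1)!}\le n!\,V_n\le\tfrac{n^{n-1}}{n-1}$. The strategy is to trap $\mathcal R_n^{(1)}$ between explicit inner and outer polytopes whose volumes have closed forms. For the upper bound, the Kunz inequalities $w_1+w_k\ge w_{k+1}$ yield $w_k\le k\,w_1\le k$, but this box has volume $(n-2)!$, which exceeds $\tfrac{n^{n-1}}{(n-1)\,n!}$ once $n\ge 5$; one must therefore retain more Kunz inequalities — in particular the wrap-around ones $w_i+w_j\ge w_1$ from pairs with $i+j\equiv 1\pmod n$ — and integrate over the thinner region they cut out. (Since equality holds in the upper bound for $2\le n\le 4$, the outer polytope has to recover $\mathcal R_n^{(1)}$ very nearly exactly.) For the lower bound one needs a full-dimensional subpolytope of $\mathcal R_n^{(1)}$ of volume at least $\tfrac{n^{n-1}}{(n-1)!\,n!}$: a natural candidate is one chamber of the arrangement $\{w_i=w_j\}$ inside the subadditive cone, where a large block of Kunz inequalities becomes redundant, re-expressed in the increment coordinates $t_k=w_k-w_{k-1}$ that make the remaining constraints triangular. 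I expect this final step to be the main obstacle: the naive box and chain estimates are off by exponential factors, so both inequalities require a careful determination of which Kunz facets are active followed by an honest volume computation, and arranging the inner and outer polytopes so that their volumes come out to the stated forms is where the real work lies.
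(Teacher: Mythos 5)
Your reduction of part (a) to Ehrhart theory is sound and is essentially the intended route: the paper treats this as a quoted result of Hellus--Waldi, whose proof likewise identifies oversemigroups of $\<n,q\>$ with lattice points of a rational polyhedral cone (the cone $O_n$ of Section~\ref{sec:oversemigroupfaces}, equivalently your Kunz-coordinate polytope cut by $w_{j_0}\le q$) and applies Ehrhart's theorem; your boundedness, full-dimensionality, and $(\ZZ/n)^{\times}$-symmetry arguments for quasipolynomiality of degree $n-1$ and constancy of the leading coefficient are correct modulo the routine citation that Ehrhart quasipolynomiality and the volume interpretation of the top coefficient persist for a translated lattice (equivalently, count points of $O_n$ on the hyperplanes $y_n=q$, as the paper does in Theorem~\ref{t:leadingcoefvolumes}).

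The genuine gap is part (b): the inequalities $\tfrac{1}{(n-1)!\,n!}\le\lambda(n)\le\tfrac{1}{(n-1)\,n!}$ are never proved. After correctly reformulating them as two-sided volume bounds $\tfrac{n^{n-1}}{(n-1)!\,n!}\le V_n\le \tfrac{n^{n-1}}{(n-1)\,n!}$ for your cross-section, you observe (rightly) that the naive outer estimate $w_k\le k\,w_1$ gives only $(n-2)!$, which is exponentially too weak, and you then describe the inner and outer polytopes you would \emph{like} to construct --- a chamber of the arrangement $\{w_i=w_j\}$ for the lower bound, and a region retaining the wrap-around inequalities $w_i+w_j\ge w_1$ for the upper bound --- without carrying out either construction or any volume computation, and you explicitly acknowledge that ``the real work lies'' there. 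Since these volume estimates are precisely the substantive content of (b) (the quasipolynomiality and volume interpretation being standard, and the present paper's own contribution, Theorems~\ref{t:leadingcoefvolumes} and~\ref{t:leadingcoefficients}, only re-expresses $\lambda(n)$ as a relative volume, deferring the bounds to \cite{oversemigroupcone}), the proposal as written establishes (a) but not (b). To close it you would need an explicit full-dimensional simplex inside the cross-section of volume at least $\tfrac{n^{n-1}}{(n-1)!\,n!}$ and an outer body, cut out by a carefully chosen subset of the Kunz facets, whose volume you can actually evaluate and bound by $\tfrac{n^{n-1}}{(n-1)\,n!}$; note the upper bound is tight for small $n$ (e.g.\ $\lambda(4)=\tfrac{1}{72}$ in Example~\ref{e:leadingcoefficients}), so no strict relaxation that discards facets active on a positive-volume set can succeed there.
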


Like Theorem~\ref{t:nmgquasi}, the proof of Theorem~\ref{t:HWthm} uses a bijection between oversemigroups of $\<n, q\>$ and certain integer points in a rational polyhedral cone $O_n$, which we refer to here as the \emph{oversemigroup cone}.  The leading coefficient $\lambda(n)$ again has a geometric interpretation, as the volume of a particular cross section of $O_n$, from which the bounds in Theorem~\ref{t:HWthm}(b) are obtained in~\cite{oversemigroupcone}.  We defer the formal definition of $O_n$ to Section~\ref{sec:oversemigroupfaces}.  



\subsection{Enter the group cone}
The goal of this manuscript is to provide a common framework for studying the combinatorial structure of the Kunz polyhedron $P_m$ and the oversemigroup cone~$O_n$ via the introduction of a new family of polyhedra, the \emph{group cone} $\mathcal C(G)$ of a finite abelian group $G$ (Definition~\ref{d:groupcone}), which has also appeared in the context of discrete optimization as the cone of subadditive functions~\cite{subadditivitycone,groupmixedinteger}.  

\begin{itemize}
\item 
We give a combinatorial description of the faces of $\mathcal C(G)$ in terms of certain partially ordered sets.  In doing so, we complete the partial description of the faces of the Kunz polyhedron from~\cite{wilfmultiplicity}, and provide a previously unknown description of the faces of the oversemigroup cone.  In both settings, the poset corresponding to a face $F$ manifests within the divisibility posets of all semigroups lying in $F$.  

\item 
We identify particular cross sections of the group cone whose relative volumes equal the leading coefficients of the quasipolynomials in Theorems~\ref{t:nmgquasi} and~\ref{t:HWthm}.  
This implies that obtaining a triangulation for the group cone, which is a common method for computing or bounding cross section volumes, simultaneously yields control over the leading coefficients of both quasipolynomials.  

\end{itemize}



Several subsequent papers have already made use of the framework established here.  First, one operation that has been studied extensively in the numerical semigroup literature is called \emph{gluing}, and in~\cite{kunzfaces2}, the gluing operation is realized geometrically via a collection of combinatorial embeddings of group cones.  Second, a~major technique for understanding a numerical semigroup $S$ is to study its \emph{minimal presentations}, which encode minimal algebraic relations among the generators of $S$, and in~\cite{kunzfaces3}, a combinatorial connection is given between the face of $P_m$ containing $S$ and the minimal presentations of $S$.  Said another way, the face lattice of the group cone gives a stratification of the set of all numerical semigroups, wherein the numerical semigroups within each stratum have minimal presentations of a particular combinatorial type.  Third, in forthcoming work, the ideas of this paper are used to develop a combinatorial recipe for specializing free resolutions of numerical semigroup algebras.  



After defining the necessary terminology from polyhedral geometry in Section~\ref{sec:background}, we introduce the group cone $\mathcal{C}(G)$ in Section~\ref{sec:groupcone} and study the combinatorial data associated to its faces.  Sections~\ref{sec:kunz} and~\ref{sec:oversemigroupfaces} contain formal definitions of the Kunz polyhedron $P_m$ and the oversemigroup cone $O_n$, respectively, and provide precise connections to the faces of the group cone.  Combining results in these sections gives a direct correspondence between the Kunz polyhedron and the oversemigroup cone.  In Section~\ref{sec:leadingcoeffs}, we reduce the task of obtaining the precise leading coefficients of the quasipolynomial formulas in the counting problems described above to that of finding a triangulation of the corresponding group cone.  We conclude with Section~\ref{sec:aperycompute}, wherein we present an improved algorithm for computing the Ap\'ery set of a numerical semigroup $S$.  
We also include an appendix with some computational data related to the quasipolynomial functions introduced above.

\section{Background}
\label{sec:background}

In this section, we provide the necessary definitions related to convex rational polyhedra and partially ordered sets.  For more thorough introductions, see \cite{ziegler} and~\cite{ec}.  

A \emph{partially ordered set} (or \emph{poset}, for short) is a set $Q$ equipped with a relation $\preceq$ (called a \emph{partial order}) that is reflexive, antisymmetric, and transitive.  Given $q, q' \in Q$, we write $q' \prec q$ whenever $q' \preceq q$ and $q \ne q'$.  We say $q$ \emph{covers} $q'$ if $q' \prec q$ and there does not exist $q'' \in Q$ with $q' \prec q'' \prec q$.  If $(Q, \preceq)$ has a unique minimal element $0 \in Q$, the \emph{atoms} of $Q$ are the elements that cover $0$.  Posets are often depicted using a \emph{Hasse diagram} in which the elements of $Q$ are drawn so that whenever $q$ covers $q'$, $q$ is drawn above $q'$ and an edge is drawn from $q$ down to $q'$.  See Figure~\ref{f:m6posets} for examples.  

A \emph{rational polyhedron} $P \subset \RR^d$ is the set of solutions to a finite list of linear inequalities with rational coefficients, that is, 
$$P = \{x \in \RR^d : Ax \le b\}$$
for some matrix $A$ and vector $b$.  
If none of the inequalities can be omitted without altering $P$, we call this list the \emph{$H$-description} or \emph{facet description} of $P$ (such a list of inequalities is unique up to reording and multiplying both sides by a positive constant).  The~inequalities appearing in the $H$-description of $P$ are called \emph{facet inequalities} of~$P$.  

Given a facet inequality $a_1x_1 + \cdots + a_dx_d \le b$ of $P$, the intersection of $P$ with the hyperplane $a_1x_1 + \cdots + a_dx_d = b$ is called a \emph{facet} of $P$.  
A \emph{face} $F$ of $P$ is a subset of $P$ equal to the intersection of some collection of facets of $P$.  The set of facets containing $F$ is called the \emph{$H$-description} or \emph{facet description} of $F$.  The \emph{dimension} of a face $F$ is the dimension $\dim(F)$ of the affine linear span of $F$.  We say $F$ is a \emph{vertex} if $\dim(F) = 0$, an \emph{edge} if $\dim(F) = 1$ and $F$ is bounded, a \emph{ray} (or an \emph{extremal ray}) if $\dim(F) = 1$ and $F$ is unbounded, and a \emph{ridge} if $\dim(F) = d - 2$.  

If the origin is the unique point lying in the intersection of all facets of $P$ (or, equivalently, if $b = 0$ in the $H$-description of $P$), then we call $P$ a \emph{pointed cone}.  Separately, we say $P$ is a \emph{polytope} if $P$ is bounded.  If $P$ is a pointed cone, then any face $F$ equals the nonnegative span of the rays of $P$ it contains, and if~$P$ is a polytope, then any face $F$ equals the convex hull of the set of vertices of $P$ it contains; in each case, we call this the \emph{V-description} of $F$.  

The set of faces of a polyhedron $P$ forms a poset under containment that is a \emph{lattice} (i.e., any two faces have a unique greatest common descendant and a unique least common ancestor) and is \emph{graded} by dimension (i.e., whenever $F$ covers $F'$, we have $\dim(F) = \dim(F') + 1$).  If $P$ is a polytope, then every face of $P$ equals the convex hull of some collection of vertices of $P$ and the intersection of some collection of facets of~$P$, meaning the face lattice is both \emph{atomic} and \emph{coatomic}.

\section{The group cone}
\label{sec:groupcone}


We begin by defining the group cone of a finite abelian group.

\begin{defn}\label{d:groupcone}
Fix a finite abelian group $(G, +)$ and let $m = |G|$.  The \emph{group cone} $\mathcal C(G)$ is the set of all points $x \in \RR_{\ge 0}^{m-1}$ satisfying
\begin{equation}\label{eq:groupconeineqs}
x_a + x_b \ge x_{a+b} \qquad \text{for} \qquad a, b \in G \setminus \{0\} \qquad \text{with} \qquad a + b \ne 0,
\end{equation}
where the coordinates are indexed by the nonzero elements of $G$.  
\end{defn}

\begin{lemma}
For any finite abelian group $G$, the group cone $\mathcal C(G)$ is full-dimensional.  If $m = |G| \ge 3$, then the inequalities in~\eqref{eq:groupconeineqs} comprise the $H$-description of $\mathcal C(G)$.  
\end{lemma}

\begin{proof}
The first claim follows from the fact that for each nonzero $a \in G$, the vector $v$ with $v_a = 2$ and $v_b = 1$ for $b \ne a$ lies in $\mathcal C(G)$, since $\RR^{m-1}$ is spanned by these vectors.  

For the second claim, suppose $m \ge 3$.  We first verify that for each $a \in G \setminus \{0\}$, the inequality $x_a \ge 0$ is redundant.  Let $k = |a|$.  The key is for any $x \in \mathcal C(G)$,
$$cx_a = x_a + (c-1)x_a \ge x_{2a} + (c-2)x_a \ge x_{3a} + (c-3)x_a \ge \cdots \ge x_{ca}$$
for any positive integer $c < k$.  As such, if $k \ge 3$, then
$$(k+1)x_a = 2x_a + (k-1)x_a \ge x_{2a} + x_{(k-1)a} \ge x_a,$$
while if $k = 2$, then there exists some $b \in G \setminus \{0,a\}$, so
$$x_b + 2x_a \ge x_{b+a} + x_a \ge x_b.$$
In either case, we obtain $x_a \ge 0$.  
Lastly, given $a, b \in G \setminus \{0\}$ with $a + b \ne 0$, we see the point $x \in \mathcal C(G)$ with $x_a = x_b = 2$, $x_{a+b} = 4$, and $x_c = 3$ for all $c \notin \{a, b, a + b\}$ satisfies every inequality in~\eqref{eq:groupconeineqs} strictly except $x_a + x_b \ge x_{a+b}$.  
\end{proof}



The remainder of this section is dedicated to a combinatorial interpretation of the face lattice of the group cone in terms of Kunz-balanced posets.

\begin{defn}\label{d:kunzbalanced}
Fix a finite abelian group $G$.  A \emph{Kunz-balanced poset on $G$} is a poset~$\preceq$ with ground set $G$ such that for all $a, b, \in G$, $a \preceq b$ implies $b - a \preceq b$.  
Since $b \preceq b$ implies $0 = b - b \preceq b$ for all $b \in G$, any Kunz-balanced poset has unique minimal element $0 \in G$.  
\end{defn}

Throughout the rest of this section, when we have a finite abelian group $G$ and a subgroup $H \subset G$, we write $\bar x$ for the image of $x$ in $G/H$.

\begin{thm}\label{t:facetdesc}
There is an injective function
$$F \longmapsto (H, \preceq)$$
sending each face $F$ of $\mathcal C(G)$ to a pair $(H, \preceq)$, where 
$$H = \{0\} \cup \{h \in G : x_h = 0 \text{ for all } x \in F\} \subset G$$ 
is a subgroup of $G$ and $\preceq$ is the Kunz-balanced poset on $G/H$ with minimal element $\bar 0$ and the property that $x_a + x_b = x_{a + b}$ is a facet equation for $F$ if and only if $\bar a \preceq \bar a + \bar b$.  
\end{thm}

\begin{proof}
For $a, b \in H \setminus \{0\}$ with $a + b \ne 0$, we have
\[
0 = x_a + x_b \ge x_{a + b},
\]
so $x_{a + b} = 0$ for all $x \in F$.  As such, $H$ is a subgroup of $G$.  Also, for all $x \in F$, 
\[
x_a
= x_a + |H| x_h
\ge x_{a + h} + (|H| - 1)x_h
\ge \cdots
\ge x_{a + |H|\cdot h}
= x_a
\]
for each $a \in G \setminus H$ and $h \in H$, meaning $x_a = x_b$ whenever $\bar a = \bar b \in G/H$.

Next, define a reflexive relation $\preceq$ on $G/H$ with unique minimal element $\bar 0$ such that for each nonzero $a, b \in G$ with $a + b \ne 0$, we have $\bar a \preceq \bar a + \bar b$ whenever $x_a + x_b = x_{a + b}$ for all $x \in F$ (note that~$\preceq$ is well defined by the last sentence of the preceding paragraph).  If~$x_a + x_b = x_{a + b}$ and $x_{a + b} + x_{-b} = x_a$, then $x_b = -x_{-b}$, and nonnegativity of $x$ implies $x_b = 0$.  We conclude $\preceq$ is antisymmetric.  Lastly, if distinct nonzero $a, b, c \in G$ satisfy $x_a + x_{b-a} = x_b$ and $x_b + x_{c - b} = x_c$, then 
\[
x_{c}
= x_{b} + x_{c - b}
= x_a + x_{b - a} + x_{c - b}
\ge x_a + x_{c -a}
\ge x_{c}
\]
implies $x_a + x_{c - a} = x_c$, so $\preceq$ is transitive and thus a partial order.  Since $\preceq$ is Kunz-balanced by construction, the proof is complete.  
\end{proof}

\begin{defn}\label{d:kunzposet}
Given a face $F \subset \mathcal C(G)$ corresponding to $(H,\preceq)$ under Theorem~\ref{t:facetdesc}, we call $H$ the \emph{Kunz subgroup} of $F$ and $\preceq$ the \emph{Kunz poset} of $F$.  
\end{defn}

\begin{example}\label{e:CZ4}
The group cone $\mathcal{C}(\ZZ_4) \subset \RR^3$ has $4$ rays and $4\ 2$-dimensional facets.  Figure~\ref{f:CZ4} depicts the Kunz-balanced poset corresponding to each of these eight proper nontrivial faces.  Notice that whenever two faces $F, F' \subset \mathcal C(G)$ satisfy $F \subset F'$, the Kunz subgroup of $F$ contains the Kunz subgroup of $F'$, and if these subgroups coincide, then the Kunz poset of $F$ refines the Kunz poset of $F'$.  In particular, the lower right ray has nontrivial Kunz subgroup since the posets of its facets have contradictory orderings (indeed, $1 \preceq 3$ in one while $3 \preceq 1$ in the other).  
\end{example}

\begin{figure}[t]
\begin{center}
\includegraphics[height=3.0in]{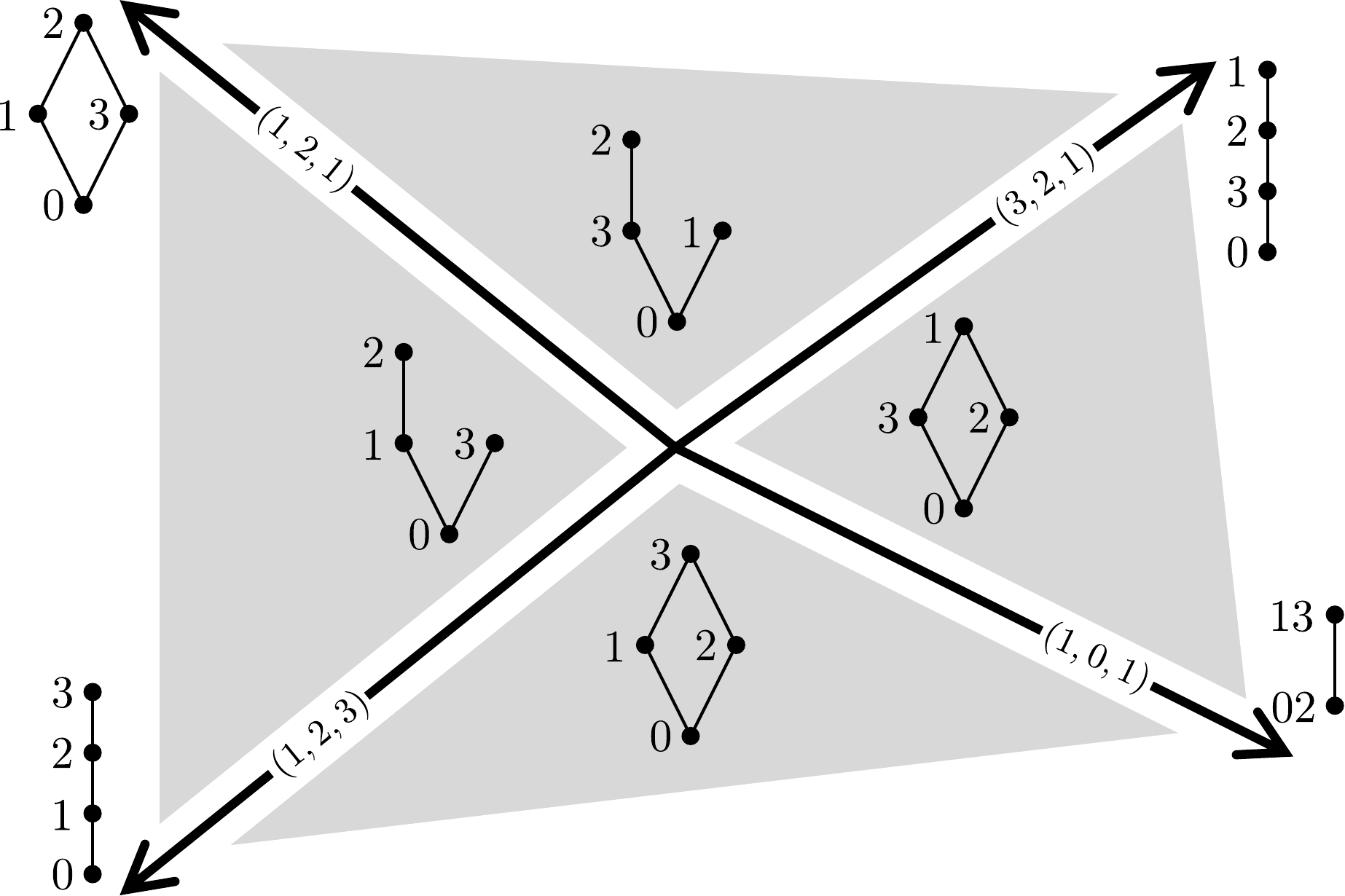}
\end{center}
\caption{The group cone $\mathcal{C}(\ZZ_4)$ with the Kunz poset of each proper, positive-dimensional face.}
\label{f:CZ4}
\end{figure}

The following is an immediate corollary of the proof of Theorem~\ref{t:facetdesc}.  

\begin{cor}\label{c:faceinjection}
For each subgroup $H \subset G$, the injection $\mathcal C(G/H) \hookrightarrow \mathcal C(G)$ given as $x \mapsto y$ with $y_a = x_{\bar a}$ for each $a \in G \setminus \{0\}$ induces an injection on face lattices.  
\end{cor}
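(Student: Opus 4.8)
The plan is to identify the image of $\iota$ with a \emph{face} of $\mathcal C(G)$; once this is done, the corollary follows from two standard facts of polyhedral geometry (see \cite{ziegler}): an affine isomorphism of polyhedra induces an isomorphism of their face lattices, and the faces of a face $F_0$ of a polyhedron $P$ are precisely the faces of $P$ contained in $F_0$. Throughout, $\iota$ is understood via the convention $\iota(x)_a = x_{\bar a}$ with $x_{\bar 0} := 0$, so that it is defined on all of $\mathcal C(G/H)$.

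First I would record that $\iota$ is a well-defined linear injection landing in $\mathcal C(G)$: injectivity holds because every nonzero element of $G/H$ lifts to a nonzero element of $G$, and for $a,b\in G\setminus\{0\}$ with $a+b\neq 0$ the inequality $\iota(x)_a+\iota(x)_b\ge\iota(x)_{a+b}$ reads $x_{\bar a}+x_{\bar b}\ge x_{\overline{a+b}}$, which is a defining inequality of $\mathcal C(G/H)$ when $\bar a,\bar b,\overline{a+b}$ are all nonzero, is trivial when $\bar a=\bar 0$ or $\bar b=\bar 0$, and follows from the nonnegativity of the coordinates of points of $\mathcal C(G/H)$ (as already used in the proof of Theorem~\ref{t:facetdesc}) when $\bar a+\bar b=\bar 0$.

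The key step is the identification
\[
\image\iota \;=\; F_0 \;:=\; \{\,y\in\mathcal C(G) : y_h=0 \text{ for all } h\in H\setminus\{0\}\,\}.
\]
The inclusion $\image\iota\subseteq F_0$ is immediate. For the reverse inclusion, this is exactly the content of the proof of Theorem~\ref{t:facetdesc}: given $y\in F_0$, the displayed chain there shows $y_a=y_{a+h}$ for every $a\in G\setminus H$ and $h\in H$, so $y$ is constant on cosets of $H$ and hence equals $\iota(x)$ for the well-defined point $x$ with $x_{\bar a}=y_a$; that $x\in\mathcal C(G/H)$ is the case analysis of the previous paragraph read backwards. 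Finally, $F_0$ is a face of $\mathcal C(G)$: each coordinate functional $y\mapsto y_h$ is nonnegative on $\mathcal C(G)$, so $\{y_h=0\}$ supports $\mathcal C(G)$ and $\mathcal C(G)\cap\{y_h=0\}$ is a face of it; then $F_0$ is the intersection of these faces over $h\in H\setminus\{0\}$, hence a face.

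With $\image\iota=F_0$ in hand, $\iota$ restricts to a linear isomorphism $\mathcal C(G/H)\to F_0$, which induces an isomorphism of face lattices; composing with the inclusion of the face lattice of $F_0$ into that of $\mathcal C(G)$ furnished by the second standard fact above yields the desired injection, sending a face $F$ of $\mathcal C(G/H)$ to the face $\iota(F)$ of $\mathcal C(G)$ and visibly preserving containment. I expect the only real content to be the identification $\image\iota=F_0$ together with its recognition as a face; the remaining steps are routine bookkeeping. One should also handle separately the degenerate case $H=G$ (where $F_0$ collapses to the apex of the cone) and, if one insists on including it, $|G|\le 2$.
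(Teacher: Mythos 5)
Your proof is correct and is essentially the paper's own argument: the paper declares the corollary immediate from the proof of Theorem~\ref{t:facetdesc}, whose coset-constancy computation is exactly your identification of the image of $\iota$ with the face $\{y \in \mathcal C(G) : y_h = 0 \text{ for all } h \in H \setminus \{0\}\}$, followed by the standard facts that this set is a face and that faces of a face are faces of the whole cone. The only cosmetic point is that the degenerate case requiring the paper's implicit nonnegativity convention (i.e.\ reading $\mathcal C(\ZZ_2)$ as $\RR_{\ge 0}$) is $[G:H]=2$ rather than $|G|\le 2$, but this matches how the paper itself treats $\mathcal C(\ZZ_2)$ in Example~\ref{e:m6posets}.
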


\begin{remark}\label{r:automorphisms}
The automorphism group of $G$ acts on the group cone $\mathcal C(G)$ by permuting the coordinates of each $x \in \mathcal C(G)$, which induces an action on the face lattice of $\mathcal C(G)$ and thus on the associated Kunz posets under Theorem~\ref{t:facetdesc}.  In particular, a face is fixed by a particular automorphism of $G$ if and only if its Kunz poset is fixed as well.  
\end{remark}

\begin{example}\label{e:m6posets}
The cone $\mathcal C(\ZZ_6)$ has $11$ extremal rays, each of which is the nonnegative span of one of the following primitive integer vectors:
\begin{center}
$\begin{array}{l@{\qquad}l@{\qquad}l}
\begin{array}{l}
(1, 0, 1, 0, 1)
\\
(1, 2, 0, 1, 2)
\\
(2, 1, 0, 2, 1)
\end{array}
&
\begin{array}{l@{\quad}l}
(1, 2, 3, 4, 5)
&
(5, 4, 3, 2, 1)
\\
(1, 2, 3, 4, 2)
&
(2, 4, 3, 2, 1)
\\
(1, 2, 3, 1, 2)
&
(2, 1, 3, 2, 1).
\end{array}
&
\begin{array}{l}
(1, 2, 1, 2, 1)
\\
(1, 2, 3, 2, 1)
\\
\phantom{}
\end{array}
\end{array}$
\end{center}
The $3$ rays in the first column are those whose Kunz subgroup $H$ is nontrivial.  The ray $(1, 0, 1, 0, 1)$ has Kunz subgroup $H = \{0,2,4\}$ and is the image of the single ray generated by $(1)$ in $\mathcal C(\ZZ_2)$ under the embedding in Corollary~\ref{c:faceinjection}.  The other $2$ rays, namely $(1, 2, 0, 1, 2)$ and $(2, 1, 0, 2, 1)$, both have Kunz subgroup $H = \{0,3\}$ and are embeddings of the rays generated by $(1,2)$ and $(2,1)$ in $\mathcal C(\ZZ_3)$, respectively.  

The remaining 8 rays each have their Kunz poset on $\ZZ_6$ as depicted in Figure~\ref{f:m6posets}.  Each of the first 6 posets appears next to the poset lying in the same orbit under the action of the automorphism group of $\ZZ_6$ discussed in Remark~\ref{r:automorphisms}, 
and the last 2 are fixed by both automorphisms.  This is also visually evident in the Hasse diagrams of Figure~\ref{f:m6posets}, where the last 2 are again the ones fixed under both automorphisms.  
\end{example}

\begin{figure}[t!]
\begin{center}
\begin{tabular}{c@{\qquad}c@{\qquad}c@{\qquad}c}
\includegraphics[width=0.8in]{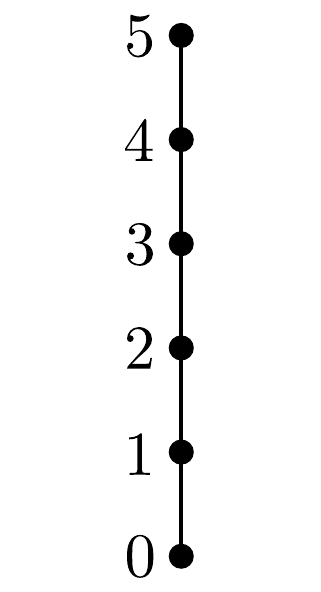}
&
\includegraphics[width=0.8in]{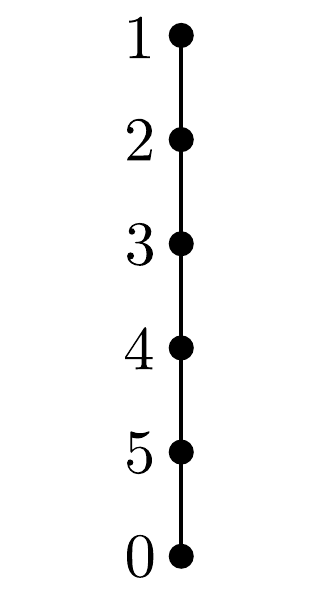}
&
\includegraphics[width=0.8in]{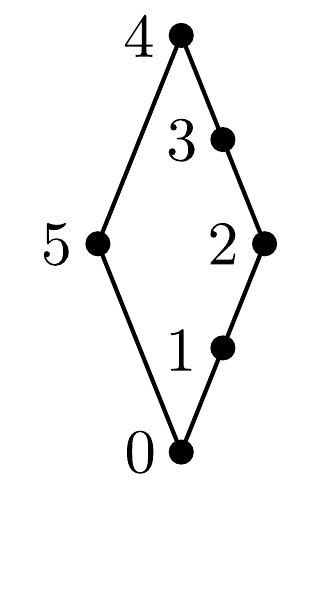}
&
\includegraphics[width=0.8in]{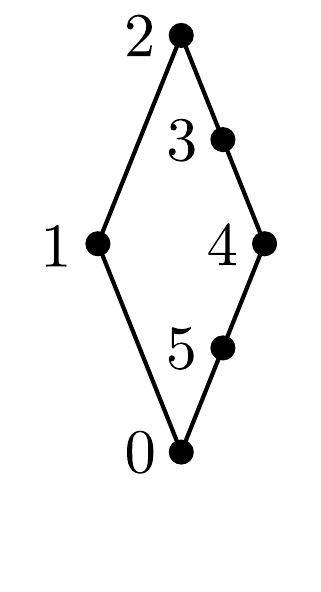}
\\[0.2em]
$(1,2,3,4,5)$
&
$(5,4,3,2,1)$
&
$(1,2,3,4,2)$
&
$(2,4,3,2,1)$
\\[1.0em]
\includegraphics[width=0.8in]{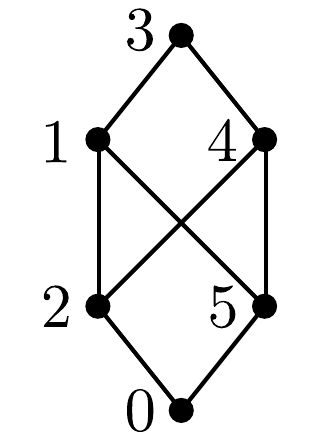}
&
\includegraphics[width=0.8in]{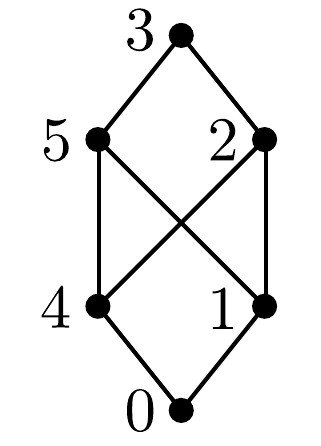}
&
\includegraphics[width=0.8in]{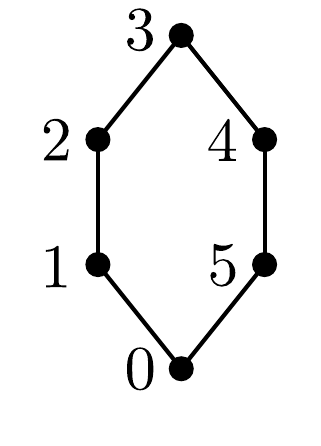}
&
\includegraphics[width=0.8in]{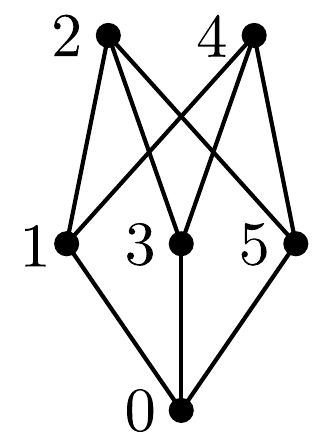}
\\[0.2em]
$(2,1,3,2,1)$
&
$(1,2,3,1,2)$
&
$(1,2,3,2,1)$
&
$(1,2,1,2,1)$
\end{tabular}
\end{center}
\caption{The Kunz posets of the extremal rays of $\mathcal C(\ZZ_6)$ in Example~\ref{e:m6posets}.}
\label{f:m6posets}
\end{figure}

\begin{prop}\label{p:minelements}
Fix a face $F$ of $\mathcal C(G)$ with Kunz subgroup $H$ and poset $(G/H,\preceq)$, and fix $a, b \in G/H$.  Let $M \subset G/H$ denote the set of atoms of $\preceq$.  
\begin{enumerate}[(a)]
\item 
If $a \prec b$, then $b$ covers $a$ if and only if $b - a \in M$.  In particular, each cover relation of $\preceq$ can be naturally labeled by an element of $M$.  

\item 
The coordinates of any point $x \in F$ are uniquely determined by the values of the coordinates $x_m$ for $m \in M$.  In particular, $\dim F \le |M|$.  

\end{enumerate}
\end{prop}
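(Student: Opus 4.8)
The plan is as follows. For notational convenience I would first reduce to the case $H = \{0\}$: by Corollary~\ref{c:faceinjection} together with the fact, established in the proof of Theorem~\ref{t:facetdesc}, that $x_a = x_b$ for every $x \in F$ whenever $\bar a = \bar b$, the face $F$ may be regarded as a face of $\mathcal C(G/H)$ with trivial Kunz subgroup. So assume $G = G/H$, that $\preceq$ is a Kunz-balanced poset on $G$ with unique minimum $0$ and atom set $M$, and --- this is the form of Theorem~\ref{t:facetdesc} I will use repeatedly --- that for nonzero, distinct $a, c \in G$ one has $a \prec c$ if and only if the equation $x_a + x_{c - a} = x_c$ holds for all $x \in F$. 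The key structural feature is that this equation is symmetric under $a \leftrightarrow c - a$.

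For part (a), the case $a = 0$ is immediate, since then ``$b$ covers $a$'' and ``$b - a \in M$'' both say $b \in M$; so assume $a \ne 0$. I would establish an ``insertion'' principle in two directions. First, if $a \prec c \prec b$ with $a, b, c$ nonzero and pairwise distinct, then combining $x_a + x_{c-a} = x_c$ with $x_c + x_{b-c} = x_b$ (both valid on all of $F$) and cancelling $x_a$ against $x_a + x_{b-a} = x_b$ yields $x_{c-a} + x_{b-c} = x_{b-a}$ on $F$; since $(c-a) + (b-c) = b-a$ and each of $c-a,\, b-c,\, b-a$ is nonzero, this says $0 \prec c - a \prec b - a$, so $b - a \notin M$. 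Hence $b - a \in M$ forces $b$ to cover $a$. Conversely, if $b - a \notin M$, choose $e$ with $0 \prec e \prec b - a$; I claim $a \prec a + e \prec b$. Here $a + e \ne 0$: otherwise $e = -a$, and the Kunz-balanced property applied to $-a \prec b - a$ gives $b \preceq b - a$, while $a \prec b$ gives $b - a \preceq b$, so antisymmetry would force $a = 0$. Now $x_e + x_{(b-a)-e} = x_{b-a}$ and $x_a + x_{b-a} = x_b$ give $x_a + x_e + x_{(b-a)-e} = x_b$ on $F$, while subadditivity gives $x_b = x_a + x_e + x_{(b-a)-e} \ge x_{a+e} + x_{(b-a)-e} \ge x_b$; equality throughout yields $x_a + x_e = x_{a+e}$ and $x_{a+e} + x_{(b-a)-e} = x_b$ on $F$, that is $a \prec a+e$ and $a+e \prec b$ (the indices involved being nonzero and distinct since $e \notin \{0, b-a\}$). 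So $b$ does not cover $a$. This proves (a), with the cover $a \prec b$ labeled by the atom $b - a$.

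For part (b), fix $x \in F$ and $a \in G \setminus \{0\}$. Since $\preceq$ is finite with unique minimum $0$, choose a chain $0 = a_0 \prec a_1 \prec \cdots \prec a_k = a$ in which each $a_{i+1}$ covers $a_i$. By part (a), $m_i := a_{i+1} - a_i \in M$ for every $i$, and the equation $x_{a_i} + x_{m_i} = x_{a_{i+1}}$ holds on $F$ (reading $x_{a_0} = x_0 = 0$ when $i = 0$). Telescoping gives $x_a = \sum_{i=0}^{k-1} x_{m_i}$. Fixing one such chain for each nonzero $a$ therefore exhibits $F$ as a subset of the linear subspace of $\RR^{|G|-1}$ cut out by the equations $x_a = \sum_i x_{m_i}$ for $a \notin M$, whose dimension is at most $|M|$; in particular each coordinate of $x$ is a fixed $\ZZ_{\ge 0}$-linear combination of the coordinates indexed by $M$, and $\dim F \le |M|$.

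The conceptual content is light; I expect the only real work to be the bookkeeping in part (a) --- keeping track that every index appearing in an application of Theorem~\ref{t:facetdesc} or of subadditivity is a nonzero element of $G$, and in particular disposing of the degenerate ``the sum is $0$'' cases, which above are handled via the Kunz-balanced property together with antisymmetry of $\preceq$. Once part (a) is in place, part (b) is a routine telescoping argument.
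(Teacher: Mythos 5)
Your proof is correct and follows essentially the same route as the paper's: reduce to trivial Kunz subgroup via Corollary~\ref{c:faceinjection}, prove both directions of (a) by combining facet equalities with subadditivity (your intermediate element $a+e$ is the paper's $b-c$ under the substitution $e = b-a-c$), and deduce (b) from (a) by walking up chains of cover relations. The only differences are cosmetic: you handle the degenerate cases ($a=0$, vanishing indices) more explicitly, and you phrase (b) as telescoping along a saturated chain rather than induction on height.
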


\begin{proof}
Since part~(a) depends only on the poset structure of $\preceq$, and the injection in Corollary~\ref{c:faceinjection} preserves dimension, it suffices to assume $H = \{0\}$ in both parts.  

Suppose $a \prec b$, so that $x_b = x_a + x_{b - a}$.  If $a \prec c \prec b$, then 
\[
x_a + x_{b - a} = x_b = x_c + x_{b - c} = x_a + x_{c - a} + x_{b - c}
\]
meaning $x_{b - a} = x_{c - a} + x_{b - c}$.  This means $c - a \prec b - a$, so $b - a \notin M$.  

Conversely, if $a \prec b$ but $b - a \notin M$, then some nonzero element $c$ satisfies $c \prec b - a$.  By transitivity, $c \prec b - a \prec b$, so
\[
x_{b - c} + x_c = x_b = x_a + x_{b - a} = x_a + x_c + x_{b - a - c},
\]
from which we conclude $a \prec b - c \prec b$.  

Next, suppose $b \in G$ is nonzero and not an atom, so that $b$ covers some other element $a \in G$.  By~part~(a), $b = a + m$ for some $m \in M$, and $x_b = x_a + x_m$, so part~(b) now follows from induction on the height of $b$ in $\preceq$.  
\end{proof}

\begin{remark}\label{r:dimensionbounds}
The inequality on $\dim F$ in Proposition~\ref{p:minelements}(b) can be strict, as demonstrated by $6$ of the $8$ posets in Example~\ref{e:m6posets}.  Also, the number of maximal elements of a Kunz poset need not bound the dimension of its corresponding face in the group cone in either direction.  Indeed, the poset corresponding to the ray $(1,2,1,2,1)$ of $\mathcal{C}(\ZZ_6)$ in Example~\ref{e:m6posets} has $2$ maximal elements, and the $2$-dimensional facet in $\mathcal C(\ZZ_4)$ with defining equation $x_3 = x_1 + x_2$ has only $1$ maximal element.  
\end{remark}

The following two propositions demonstrate that not every Kunz-balanced poset on an abelian group $G$ corresponds to a face of $\mathcal C(G)$.  They also provide evidence that characterizing precisely the set of Kunz-balanced posets that correspond to faces is likely difficult in general; see Remark~\ref{r:otherrestrictions}.  

\begin{prop}\label{p:diamondproperty}
Fix a face $F \subset \mathcal C(G)$ with Kunz poset $\preceq$ and trivial Kunz subgroup.  If $a, b, c \in G \setminus \{0\}$ satisfy $a \prec a + b$ and $a + b \prec a + b + c$, then $a \prec a + c$ and $a + c \prec a + b + c$.  
\end{prop}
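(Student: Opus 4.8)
The plan is to argue directly in terms of the point $x \in F$ and the defining equalities of the face, just as in the proof of Theorem~\ref{t:facetdesc} and Proposition~\ref{p:minelements}. The hypotheses $a \prec a+b$ and $a+b \prec a+b+c$ translate into the coordinate equalities $x_a + x_b = x_{a+b}$ and $x_{a+b} + x_c = x_{a+b+c}$, both holding for every $x \in F$. Adding these gives $x_a + x_b + x_c = x_{a+b+c}$. The goal is to split this single equality into the two desired ones, $x_a + x_c = x_{a+c}$ and $x_{a+c} + x_b = x_{a+b+c}$.

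First I would record the relevant instances of the defining inequalities of $\mathcal C(G)$, which hold pointwise on all of $\mathcal C(G)$ (not just $F$): namely $x_a + x_c \ge x_{a+c}$ and $x_{a+c} + x_b \ge x_{a+b+c}$, provided the relevant sums are nonzero — a point I return to below. Chaining these two inequalities yields $x_a + x_b + x_c \ge x_{a+c} + x_b \ge x_{a+b+c}$ for every $x \in F$. But we just showed the two ends are equal on $F$, so both inequalities are forced to be equalities on $F$: that is, $x_a + x_c = x_{a+c}$ and $x_{a+c} + x_b = x_{a+b+c}$ for all $x \in F$. By the correspondence in Theorem~\ref{t:facetdesc} (with $H$ trivial, so $\bar a = a$ etc.), the first equality says $a \prec a+c$ and the second says $a+c \prec a+b+c$, which is exactly the conclusion. (The strictness — that these are genuine $\prec$ relations rather than equalities of group elements — comes for free: $a \neq a+c$ since $c \neq 0$, and $a+c \neq a+b+c$ since $b \neq 0$.)

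The one genuine obstacle is bookkeeping the degenerate cases where a needed inequality is not among the defining inequalities of $\mathcal C(G)$ because one of the arguments is $0$ or a relevant sum vanishes. Specifically I need to rule out $c = 0$ (immediate, as $c \in G \setminus \{0\}$ by hypothesis), $a + c = 0$, and $a + b + c = 0$. If $a + c = 0$, then the hypothesis $a \prec a+b$ together with Kunz-balancedness would give $b = (a+b) - a \preceq a+b$, hence combined with $a \prec a+b$ and antisymmetry-type reasoning from the proof of Theorem~\ref{t:facetdesc} one forces $x_a = 0$, contradicting triviality of the Kunz subgroup (since then $a \in H = \{0\}$); I would spell out this short argument. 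Similarly, if $a+b+c = 0$ then $x_{a+b+c}$ is not a coordinate, but then $a+b \prec a+b+c = 0$ contradicts $\bar 0$ being the unique minimal element of $\preceq$. Once these few cases are dispatched, the inequality-chaining argument above applies verbatim, and the proof is complete.
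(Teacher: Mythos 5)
Your proof is correct and takes essentially the same route as the paper's: translate the hypotheses into the facet equalities $x_a + x_b = x_{a+b}$ and $x_{a+b} + x_c = x_{a+b+c}$, chain the cone inequalities $x_a + x_c \ge x_{a+c}$ and $x_{a+c} + x_b \ge x_{a+b+c}$, and force equality throughout, after ruling out the degenerate vanishing cases. The only nitpick is in your $a+c=0$ case: the contradiction should pit $b \preceq a+b$ (from Kunz-balancedness) against the \emph{second} hypothesis $a+b \prec a+b+c = b$ (the paper phrases this as the impossible chain $b \prec a+b \prec a+b+c = b$), not against $a \prec a+b$ as you wrote, but this is a one-line fix.
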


\begin{proof}
Under the given assumptions, $a + b$ and $a + b + c$ are nonzero since neither is minimal under $\preceq$.  Additionally, $a + c$ must be nonzero, since otherwise 
$$b \prec a + b \prec a + b + c = b,$$
which is impossible.  Any $x \in F$ has $x_a + x_b = x_{a+b}$ and $x_{a+b} + x_c =  x_{a+b+c}$, so
$$x_{a+b+c} = x_a + x_b + x_c \ge x_{a+c} + x_b \ge x_{a+b+c}.$$
This implies $x_a + x_c = x_{a+c}$ and $x_{a+c} + x_b = x_{a+b+c}$, as desired.  
\end{proof}

\begin{remark}\label{r:diamondproperty}
Proposition~\ref{p:diamondproperty} is a kind of ``diamond property'' that is 
a reflection of the commutativity of $G$; see the left graphic in Figure~\ref{f:posetproperties} for a depiction.  
\end{remark}

\begin{prop}\label{p:cycleproperty}
Fix a face $F \subset \mathcal C(G)$ with Kunz poset $\preceq$ and trivial Kunz subgroup, a subgroup $G' \subset G$ with $|G'|$ odd, and $a \in G \setminus G'$ with $2a \notin G'$.  The following are equivalent:
\begin{enumerate}[(a)]
\item 
for some $b \in G'$, we have $i \prec 2i + b$ for every $i \in a + G'$; and

\item 
$i \prec j$ for each $i \in a + G'$ and $j \in 2a + G'$.  

\end{enumerate}
\end{prop}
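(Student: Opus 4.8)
I will use the correspondence of Theorem~\ref{t:facetdesc}. Since the Kunz poset of $F$ is a poset on $G$ itself, the Kunz subgroup of $F$ is trivial, so for nonzero $c,d \in G$ with $c+d \neq 0$ we have $c \prec c+d$ exactly when $x_c + x_d = x_{c+d}$ holds for every $x \in F$ (the equalities forced by reflexivity and transitivity of $\preceq$ likewise hold identically on $F$, as is clear from the proof of that theorem). Write $n = |G'|$, and for $x \in F$ put $S_1 = \sum_{i \in a+G'} x_i$ and $S_2 = \sum_{j \in 2a+G'} x_j$. First I would record three bookkeeping facts: (i) since $a \notin G'$ and $2a \notin G'$, the cosets $a+G'$ and $2a+G'$ are distinct and neither contains $0$; (ii) for $i \in a+G'$ and $j \in 2a+G'$ one has $j-i \in a+G'$, so in particular $j-i \neq 0$; and (iii) because $n$ is odd, $g \mapsto 2g$ is a bijection of $G'$, hence for every $b \in G'$ the map $i \mapsto 2i+b$ is a bijection of $a+G'$ onto $2a+G'$, while $i \mapsto i+b$ is a bijection of $a+G'$ onto itself.

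The implication (b)$\Rightarrow$(a) is then immediate: take $b = 0$; for each $i \in a+G'$ we have $2i \in 2a+G'$ by (iii), so $i \prec 2i$ by (b), and $i \neq 2i$ by (i).

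For (a)$\Rightarrow$(b) the plan is a double counting of the subadditivity relations. Fix $x \in F$. Hypothesis (a) reads $x_i + x_{i+b} = x_{2i+b}$ for all $i \in a+G'$; summing over $i$ and applying the bijections in (iii) rewrites the left-hand side as $2S_1$ and the right-hand side as $S_2$, so $S_2 = 2S_1$. On the other hand subadditivity gives $x_i + x_{j-i} \ge x_j$ for all $i \in a+G'$ and $j \in 2a+G'$ by (ii); fixing $j$ and summing over $i$, reindexing the second sum by $i \mapsto j-i$, gives $2S_1 \ge n\,x_j$ for each $j \in 2a+G'$. Summing these $n$ inequalities yields $2nS_1 \ge nS_2 = 2nS_1$, so each is an equality, i.e.\ $2S_1 = n\,x_j$ for every $j$. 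Substituting back, $\sum_{i \in a+G'}\bigl(x_i + x_{j-i} - x_j\bigr) = 2S_1 - n\,x_j = 0$ is a sum of nonnegative terms and hence term-by-term zero, so $x_i + x_{j-i} = x_j$ for all $i \in a+G'$, $j \in 2a+G'$. Since this holds for every $x \in F$ and the relevant group elements are nonzero with $j-i \neq 0$, we conclude $i \prec j$, which is (b).

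The argument is short; the only place the hypotheses matter in an essential way is step (iii), where oddness of $|G'|$ makes $i \mapsto 2i+b$ a bijection and thereby forces the sum of the hypothesis-(a) equalities to collapse to $S_2 = 2S_1$. The remaining work is just checking that the reindexings are genuine bijections of the relevant cosets and that everything derived holds simultaneously for all $x \in F$ (which it does, the derivation using only $x \in F$), so that the pointwise equalities upgrade to poset relations. I do not anticipate a real obstacle here; one could alternatively try to iterate the diamond property of Proposition~\ref{p:diamondproperty} together with transitivity around the coset $a+G'$, but the counting argument is cleaner.
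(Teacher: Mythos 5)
Your argument is correct and is essentially the paper's proof: both sum the subadditivity relations over the coset $a+G'$, use oddness of $|G'|$ to make $i \mapsto 2i+b$ a bijection of $a+G'$ onto $2a+G'$ so that hypothesis (a) forces $\sum_{j \in 2a+G'} x_j = 2\sum_{i \in a+G'} x_i$, and then conclude that tightness of the summed inequality forces each individual inequality to be an equality, giving (b). The only cosmetic difference is the grouping of the inequalities -- the paper fixes the shift $b$ and re-sums the relations $x_i + x_{i+b} \ge x_{2i+b}$ for each $b \in G'$, whereas you fix the target $j \in 2a+G'$ and sum $x_i + x_{j-i} \ge x_j$ over $i$ -- but the underlying squeeze is identical.
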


\begin{proof}
The condition $2a \notin G'$ ensures every element of $G$ in both statements is nonzero.  For any $b \in G'$, if $g, g' \in G'$ satisfy $2(a + g) + b = 2(a + g') + b$, then the order of $g - g'$ in $G$ divides $2$, and since $|G'|$ is odd, we can conclude $g = g'$.  From this, we obtain
\begin{equation}\label{Gprimesum}
\sum_{g \in G'} 2x_{a + g} \ge \sum_{g \in G'} x_{2a + g}
\end{equation}
for all $x \in \mathcal C(G)$ by applying each inequality $x_i + x_{i + b} \ge x_{2i + b}$ exactly once for each $i \in a+G'$.  As such, if~(a) holds for some $b \in G'$, then equality holds in \eqref{Gprimesum}, so~(a) must hold for all $b \in G'$, from which~(b) follows.  Since~(b) clearly implies~(a), this completes the proof.  
\end{proof}



\begin{remark}\label{r:cycleproperty}

We give an example of Proposition~\ref{p:cycleproperty} where $G = \ZZ_9$, $G' = \{0, 3, 6\}$, and $a = 1$. 
This example is depicted in the middle of Figure~\ref{f:posetproperties}.  Doubling any element of $a + G' = \{1, 4, 7\}$ yields a distinct element of $2 + G' = \{2, 5, 8\}$, and this forces all possible relations $c \prec c'$ for $c \in 1 + G'$ and $c' \in 2 + G'$ to hold once the relations $1 \prec 2$, $4 \prec 8$, and $7 \prec 5$ hold. Intuitively, the sums of elements of $1 + G'$ are ``evenly distributed'' in $2 + G'$, so once sufficiently many relations between them are included, the rest must also appear.  
\end{remark}

\begin{remark}\label{r:otherrestrictions}
Propositions~\ref{p:diamondproperty} and~\ref{p:cycleproperty} are not the only restrictions on Kunz posets.  For example, when $G = \ZZ_8$, intersecting the facets $x_1 + x_5 = x_6$ and $x_3 + x_7 = x_2$ yields a face whose poset is depicted on the right in Figure~\ref{f:posetproperties}.  This is particularly noteworthy since it is an example of two facets with no variables in common whose intersection is not a ridge (a face of codimension $2$), a phenomenon that does not occur in $\mathcal C(\ZZ_m)$ for $m \le 7$.  In fact, the face of $\mathcal C(\ZZ_8)$ corresponding to this poset only has dimension $2$. 
\end{remark}

\begin{figure}[t!]
\begin{center}
\includegraphics[height=1.0in]{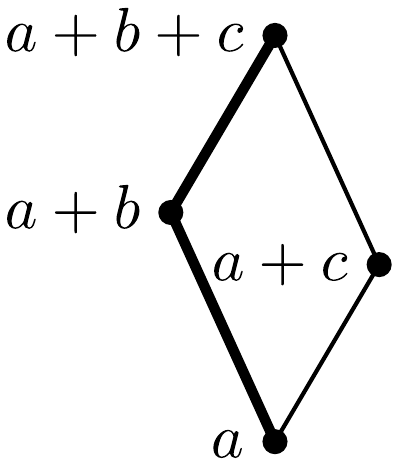}
\hspace{0.8in}
\includegraphics[height=1.0in]{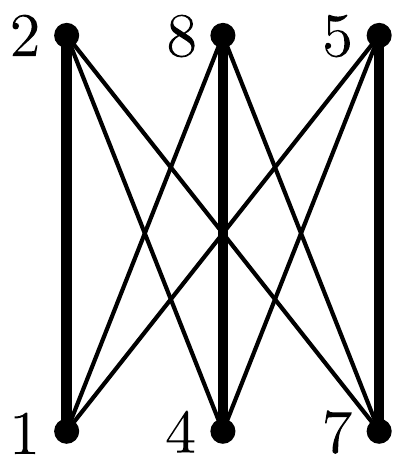}
\hspace{0.8in}
\includegraphics[height=1.0in]{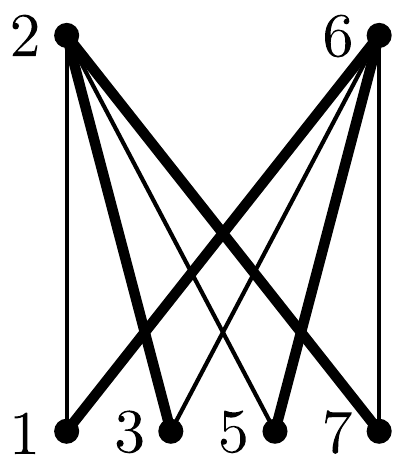}
\end{center}
\caption{In each of the depicted Hasse diagram excerpts, the thin lines are consequences of the thick lines.}
\label{f:posetproperties}
\end{figure}


\begin{prob}\label{prob:truebijection}
Determine when a given pair $(H, \preceq)$ of a subgroup $H \subset G$ and a Kunz-balanced poset $\preceq$ on $G/H$ corresponds to a face $F \subset \mathcal C(G)$.  
\end{prob}

\section{The Kunz polyhedron}
\label{sec:kunz}





We begin this section by defining the Kunz polyhedron $P_m$ and explaining the bijection between its integer points and numerical semigroups containing $m$.  Although many of the results in this section have appeared elsewhere, we state them here using the language of Section~\ref{sec:groupcone}.  Doing so answers \cite[Problem~3.14]{wilfmultiplicity} by providing a complete combinatorial characterization of the faces of $P_m$ (Theorem~\ref{t:kunzfaces}).  

One of the primary new insights of this section is Corollary~\ref{c:aperypoints}, which identifies a correspondence between integer points in $\mathcal{C}(\ZZ_m)$ and numerical semigroups containing $m$.  This correspondence first identifies these points of $\mathcal{C}(\ZZ_m)$ with integer points in $P_m$.  This allows one to move freely between the inequalities defining $P_m$ and those defining $\mathcal C(\ZZ_m)$.  This is often helpful when working with particular families of semigroups, since the homogeneity of the inequalities defining $\mathcal{C}(\ZZ_m)$ makes them easier to work with.


\begin{defn}
Let $S$ be a numerical semigroup.  The Ap\'ery set of $S$ with respect to an element $m \in S$ is the set
\[
\Ap(S;m) = \{s \in S \colon s-m \not\in S\}.
\]
It is easily shown $\Ap(S;m)$ has precisely $m$ elements, each in a distinct equivalence class modulo $m$.  More precisely, $\Ap(S;m) = \{0,a_1,\ldots, a_{m-1}\}$ where each $a_i \equiv i \bmod m$ is the smallest element of $S$ in its equivalence class modulo $m$.  For each $i$, we can write $a_i = k_i m + i$ for some $k_i \in \ZZ_{\ge 0}$.  The vector $(k_1,\ldots, k_{m-1})$ is called the \emph{Kunz coordinate vector of $S$ with respect to $m$}.  Let $\KV_m$ denote the function that takes a numerical semigroup containing $m$ to its Kunz coordinate vector with respect to $m$.
\end{defn}

It is easy to see that not every vector $(z_1,\ldots, z_{m-1}) \in \ZZ_{\ge 0}^{m-1}$ is the Kunz coordinate vector of a numerical semigroup containing $m$.  
The following set of linear inequalities that determine the image of $\KV_m$ in $\ZZ_{\ge 0}^{m-1}$ can be found in~\cite{kunz,kunzcoords}.  

\begin{defn}\label{d:kunzpolyhedron}
For $m \ge 2$, the \emph{Kunz polyhedron} $P_m \subset \RR^{m-1}$ is the set of points $(z_1, \ldots, z_{m-1})$ satisfying
\begin{align*}
z_i + z_j \ge z_{i+j}       &, \text{ for all } 1 \le i \le j \le m - 1 \text{ with } \ i + j < m, \\
z_i + z_j + 1 \ge z_{i+j-m} &, \text{ for all } 1 \le i \le j \le m - 1 \text{ with } \ i + j > m,
\end{align*}
and the \emph{strict Kunz polyhedron} $P'_m \subset \RR^{m-1}$ is given by $P_m' = P_m \cap \RR_{\ge 1}^{m-1}$.  
\end{defn}

\begin{remark}\label{r:nomorerelaxed}
The terminology used for $P_m$ and $P_m'$ varies across the literature.  It has often been called the ``Kunz polytope,'' although this conflicts with the conventions of polyhedral geometry, where ``polytopes'' are bounded polyhedra.  This was corrected in~\cite{wilfmultiplicity}, wherein $P_m$ and $P_m'$ were called the ``relaxed Kunz polyhedron'' and ``Kunz polyhedron'' respectively.  We believe the names in Definition~\ref{d:kunzpolyhedron} are the most appropriate, as (i) nonnegativity and positivity inequalities are frequently viewed as implicit or extra in the lattice point and integer optimization literature, and (ii) we will see below that  the relationship between numerical semigroups and the faces of $P_m$ is more direct than the connection to faces of $P_m'$, as $P_m'$ has several additional faces that come from the inequalities $z_i \ge 1$.
\end{remark}

\begin{thm}[{\cite{kunz, kunzcoords}}]\label{t:kunzbijection}
Let $m \ge 2$.  
\begin{enumerate}[(a)]
\item 
The map $\KV_m$ gives a bijection between numerical semigroups with multiplicity $m$ and integer points in $P'_m$.

\item 
The map $\KV_m$ gives a bijection between numerical semigroups containing $m$ and integer points in $P_m$.
\end{enumerate}
\end{thm}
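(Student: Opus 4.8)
The plan is to establish part (b) directly, by constructing an explicit inverse to $\KV_m$, and then to deduce part (a) as the evident refinement to multiplicity exactly $m$. I would organize the argument into four steps.

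\emph{Injectivity and the recovery formula.} First I would observe that a numerical semigroup $S$ containing $m$ is completely recovered from $\KV_m(S) = (k_1,\dots,k_{m-1})$ via
\[
S = \{0\} \cup \bigcup_{i=1}^{m-1} \{k_i m + i + jm : j \ge 0\}.
\]
Indeed, since $m \in S$ and $a_i = k_i m + i \in S$, all of $a_i + jm$ lie in $S$; and minimality of $a_i$ in its residue class rules out every smaller element. Hence $\KV_m$ is injective on the set of numerical semigroups containing $m$.

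\emph{The image lies in $P_m$.} Given such an $S$, non-negativity of the $k_i$ is immediate from $a_i = k_i m + i > 0$. For the two inequality families, fix $1 \le i \le j \le m-1$; then $a_i, a_j \in S$ gives $a_i + a_j = (k_i + k_j)m + (i+j) \in S$. If $i+j < m$, comparing with the minimal Apéry element $a_{i+j}$ in the residue class of $i+j$ yields $k_{i+j} m + (i+j) \le (k_i+k_j)m + (i+j)$, i.e. $z_{i+j} \le z_i + z_j$; if $i + j > m$, then $a_i + a_j = (k_i + k_j + 1)m + (i+j-m)$, and comparing with $a_{i+j-m}$ gives $k_{i+j-m} \le k_i + k_j + 1$; the case $i+j = m$ imposes nothing (matching $z_0 = 0$). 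Thus $\KV_m(S) \in P_m \cap \ZZ^{m-1}$. Moreover $\mathsf m(S) = m$ holds exactly when $1,\dots,m-1 \notin S$, i.e. when $a_i > i$, equivalently $k_i \ge 1$, for all $i$; so in that case $\KV_m(S) \in P_m \cap \RR_{\ge 1}^{m-1} = P_m'$.

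\emph{Surjectivity (the main step).} Given an integer point $z = (z_1,\dots,z_{m-1}) \in P_m$, set $z_0 = 0$, $b_i = z_i m + i$ for $0 \le i \le m-1$, and $S_z = \{b_i + jm : 0 \le i \le m-1,\ j \ge 0\}$. The crux is closure under addition: since the sum of $b_i + jm$ and $b_{i'} + j'm$ equals $(b_i + b_{i'}) + (j+j')m$, it suffices to show $b_i + b_{i'} \in S_z$, which I would verify by the same case split — $i + i'$ equal to $<m$, $=m$, or $>m$ (the subcase $i = 0$ or $i' = 0$ being trivial) — each case using exactly one of the defining inequalities of $P_m$ to write $b_i + b_{i'} = b_\ell + cm$ with $\ell \in \{0,\dots,m-1\}$ and $c \ge 0$. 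Finiteness of $\ZZ_{\ge 0} \setminus S_z$ is clear since $S_z$ meets every residue class cofinitely, so $S_z$ is a numerical semigroup with $m \in S_z$. Finally, the elements of $S_z$ in residue class $i$ are precisely $\{b_i + jm : j \ge 0\}$, whose minimum is $b_i$; hence $\Ap(S_z; m) = \{0, b_1, \dots, b_{m-1}\}$ and $\KV_m(S_z) = z$. Combined with the previous two steps, this proves (b).

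\emph{Deducing (a).} Every numerical semigroup of multiplicity $m$ contains $m$, and by the second step its $\KV_m$-image lies in $P_m'$; conversely, an integer point of $P_m'$ has all $z_i \ge 1$, so in the construction above $i \notin S_z$ for $1 \le i \le m-1$ and therefore $\mathsf m(S_z) = m$. Thus the bijection of (b) restricts to the bijection claimed in (a).

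I expect the only real work to be the bookkeeping in the surjectivity step: checking closure under addition from the three inequality families and confirming that the constructed semigroup reproduces $z$. There is no conceptual obstacle, but one must be careful with the ``wraparound'' case $i + i' > m$ and the boundary case $i + i' = m$, and with keeping the distinction between $P_m$ (all semigroups with $m \in S$) and $P_m'$ (multiplicity exactly $m$) straight throughout.
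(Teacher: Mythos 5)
The paper itself does not prove this theorem; it is quoted with citations to Kunz and to Rosales--Garc\'ia-S\'anchez--Garc\'ia-Garc\'ia--Branco, so your proposal has to stand on its own.  Your route (recover $S$ from $\KV_m(S)$, verify the two inequality families, then invert by building $S_z$ from an integer point) is the standard proof of this result, and the forward direction, injectivity, and the deduction of (a) from (b) are essentially fine.  One small slip: your displayed recovery formula for $S$ omits the positive multiples of $m$ (residue class $0$); it should be $S=\{jm: j\ge 0\}\cup\bigcup_{i=1}^{m-1}\{k_im+i+jm : j\ge 0\}$.  This does not affect injectivity, since the residue-$0$ part is the same for every semigroup containing $m$.

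The genuine gap is in the surjectivity step, precisely at the boundary case $i+i'=m$ and in the implicit assumption that $b_i=z_im+i\ge 0$.  When $i+i'=m$ you need $b_i+b_{i'}=(z_i+z_{i'}+1)m$ to be a \emph{nonnegative} multiple of $m$, but $P_m$ as defined in Definition~\ref{d:kunzpolyhedron} has no inequality indexed by pairs with $i+j=m$ and no explicit nonnegativity constraints, so your claim that ``each case uses exactly one of the defining inequalities of $P_m$'' fails there.  Relatedly, if some $z_i$ were sufficiently negative then $b_i<0$, so $S_z\not\subset\ZZ_{\ge 0}$ and $\KV_m(S_z)=z$ would not even make sense, since Kunz coordinates are by definition in $\ZZ_{\ge 0}$.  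What rescues the argument for $m\ge 3$ is that the listed inequalities actually force $z_i\ge -i/m$ for every point of $P_m$ (equivalently, $P_m$ is the translate of $\mathcal C(\ZZ_m)$ as in Theorem~\ref{t:kunzfaces} and $\mathcal C(\ZZ_m)\subset\RR_{\ge 0}^{m-1}$), hence every \emph{integer} point has all $z_i\ge 0$ and the $i+i'=m$ case goes through.  But this is a lemma you must prove, not a consequence of any single facet inequality: e.g.\ sum the chain of inequalities $z_{ki}+z_i\ge z_{(k+1)i}$ (with the $+1$ corrections at wraparounds) around the cyclic subgroup generated by $i$, or argue on the group cone side as in Section~\ref{sec:groupcone}.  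The case $m=2$ shows the worry is not vacuous: $P_2$ has an empty list of defining inequalities, so nonnegativity cannot be deduced there and that degenerate case must be handled separately (or read into the statement).  With this nonnegativity lemma inserted, your construction of $S_z$, the identification $\Ap(S_z;m)=\{0,b_1,\ldots,b_{m-1}\}$, and the restriction to $P_m'$ in part (a) all work as you describe.
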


\begin{notation}\label{n:semigroupinface0}
Given a numerical semigroup $S$ and a face $F \subset P_m$, we write $S \in F$ and say ``$S$ is in the face $F$'' to mean the Kunz coordinates of $S$ lie in $F$, that is, $\KV_m(S) \in F$. 
\end{notation}


In what follows, we show that the Kunz polyhedron $P_m$ is a translation of the group cone $\mathcal{C}(\ZZ_m)$, inducing a correspondence between their faces. 

\begin{defn}
Let $S$ be a numerical semigroup containing $m$ with 
\[
\Ap(S;m) = \{0,a_1, \ldots, a_{m-1}\}
\]
so that $a_i \equiv i \bmod m$ for each $i$.  The \emph{Ap\'ery poset} of $S$ is the divisibility poset of $S$ restricted to $\Ap(S;m)$, that is, with $a_i$ preceeds $a_j$ whenever $a_j - a_i \in S$.  The \emph{Kunz poset} of $S$ is the poset
\[
\mathcal{A}(S;m) = (\ZZ_m, \preceq)
\]
defined by $i \prec j$ whenever $a_j - a_i \in S$.  Said another way, $\mathcal A(S;m)$ is the divisibility poset of $S$ where each element is labeled with its equivalence class modulo $m$.
\end{defn}

The following result is basically equivalent to \cite[Theorem 3.10]{wilfmultiplicity} stated in the language of Section \ref{sec:groupcone}.

\begin{thm}\label{t:kunzfaces}
The Kunz polyhedon $P_m$ is a translation of $\mathcal C(\ZZ_m)$.  Moreover, any numerical semigroup $S$ in the interior of a face $F$ of $P_m$ has Kunz poset $\mathcal A(S;m)$ equal to the Kunz poset of $F$.  
\end{thm}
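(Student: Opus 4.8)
The plan is to exhibit an explicit translation vector and then verify that it carries the facet inequalities of $\mathcal C(\ZZ_m)$ to those of $P_m$, after which the statement about Kunz posets reduces to unwinding the two definitions. For the first assertion, set $t = (t_1, \ldots, t_{m-1})$ where $t_i$ is chosen so that adding $t$ sends the homogeneous inequality $x_i + x_j \ge x_{i+j}$ (for $i + j < m$) to $z_i + z_j \ge z_{i+j}$ and the inequality $x_i + x_j \ge x_{i+j}$ in $\ZZ_m$ with $i + j > m$ (so $i + j - m \ne 0$ in $\ZZ_m$, and the coordinate indexed by $i + j \bmod m$ is $i + j - m$) to $z_i + z_j + 1 \ge z_{i+j-m}$. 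Writing $x = z - t$ and substituting, the first family of inequalities forces $t_i + t_j = t_{i+j}$ whenever $i + j < m$, and the second forces $t_i + t_j - t_{i+j-m} = 1$ whenever $i + j > m$; the unique solution is $t_i = i/m$. Thus $z \mapsto z - (1/m)(1, 2, \ldots, m-1)$ is a bijection $P_m \to \mathcal C(\ZZ_m)$ that is an affine isomorphism, hence induces an isomorphism of face lattices sending a face $F \subset P_m$ to a face $\widehat F \subset \mathcal C(\ZZ_m)$.

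Next I would identify, for a face $F$ of $P_m$, which of its defining inequalities are tight, and transport this to $\widehat F$. A point $z$ lies in $F$'s relative interior precisely when the inequalities tight at $z$ are exactly those tight on all of $F$; the translation preserves this, so $x = z - t$ lies in the relative interior of $\widehat F$, and the Kunz poset of $\widehat F$ is (by Theorem~\ref{t:facetdesc}) read off from which equalities $x_a + x_b = x_{a+b}$ hold on $\widehat F$, equivalently which hold at the single interior point $x$. So it suffices to show: for a numerical semigroup $S$ with $\KV_m(S) = z$ lying in the relative interior of $F$, the relation $\bar a \preceq \bar a + \bar b$ in the Kunz poset of $\widehat F$ holds if and only if $a_j - a_i \in S$, where $i = \bar a$, $j = \overline{a + b}$ (mod $m$), i.e. if and only if $i \prec j$ in $\mathcal A(S;m)$.

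For that last equivalence I would argue directly with Apéry elements. If $a_j - a_i \in S$, then writing $a_i = k_i m + i$ etc., the element $a_j - a_i$ has a representation showing the corresponding translated coordinate inequality is an equality at $x$ — concretely, $a_i + (a_j - a_i) = a_j$ and $a_j - a_i \in \Ap(S;m)$ (one must check $a_j - a_i$ is itself an Apéry element, which follows since $a_j$ is minimal in its class and $a_j - m \notin S$ forces $(a_j - a_i) - m \notin S$), so the Kunz-coordinate identity $k_i + k_{j-i} = k_j$ or $k_i + k_{j-i} + 1 = k_{j-i \bmod m}$ holds, which is exactly the tightness of the relevant inequality at $z$, hence at $x$. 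Conversely, tightness of that inequality at the interior point $x$ gives the Kunz-coordinate identity, which translates back to $a_i + (a_{j-i \bmod m}) = a_j$ as elements of $S$, whence $a_j - a_i = a_{j - i \bmod m} \in S$. The one subtlety — and I expect this to be the main obstacle — is handling the $\bmod\ m$ bookkeeping cleanly and making sure "tight on $F$" and "tight at an interior point" are used correctly when $F$ is not full-dimensional (so that one genuinely needs a relative-interior point, which exists because $P_m$ has integer points dense enough in every face, via Theorem~\ref{t:kunzbijection}); once that is set up, the argument is a routine translation between the additive structure of $\Ap(S;m)$ and the inequalities cutting out $F$.
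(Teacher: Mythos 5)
Your overall strategy is sound, and your second half is actually more self-contained than the paper's argument: the paper verifies the translation by direct substitution and then, after observing that any face of $P_m$ containing a numerical semigroup must have trivial Kunz subgroup, simply cites \cite[Theorem~3.10]{wilfmultiplicity} for the identification of $\mathcal A(S;m)$ with the Kunz poset of the face. Your direct argument via the equivalence $a_j - a_i \in S \iff a_i + a_{(j-i)\bmod m} = a_j \iff$ tightness of the corresponding facet inequality is essentially a proof of that cited statement, and it is correct in substance (the cleanest justification is minimality of $a_{(j-i) \bmod m}$ in its residue class, which is equivalent to your Ap\'ery-membership check); the standard fact that a relative-interior point is tight on exactly the inequalities tight on the whole face is also used correctly, and no density argument is needed since the hypothesis hands you the interior point $\KV_m(S)$.

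There is, however, a concrete error in the first half: the sign of the translation. Writing $x = z - t$ and substituting into the Kunz inequality $z_i + z_j + 1 \ge z_{i+j-m}$ gives $x_i + x_j \ge x_{i+j-m} + \bigl(t_{i+j-m} - t_i - t_j - 1\bigr)$, so the matching condition is $t_i + t_j - t_{i+j-m} = -1$, not $+1$; combined with additivity from the first family this forces $t_i = -\tfrac{i}{m}$, i.e.\ the map $P_m \to \mathcal C(\ZZ_m)$ is $z \mapsto z + \tfrac{1}{m}(1,2,\ldots,m-1)$, equivalently the paper's $\mathcal C(\ZZ_m) \to P_m$, $x \mapsto x - \tfrac{1}{m}(1,\ldots,m-1)$. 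Your claimed map $z \mapsto z - \tfrac{1}{m}(1,\ldots,m-1)$ does not land in $\mathcal C(\ZZ_m)$: for $m = 3$, the point $z = (0,0) \in P_3$ (the semigroup $\ZZ_{\ge 0}$) is sent to $(-\tfrac{1}{3}, -\tfrac{2}{3})$, which violates $2x_2 \ge x_1$. The statement ``$P_m$ is a translate'' survives, and with the corrected sign you get the identity $x_i = k_i + \tfrac{i}{m} = \tfrac{a_i}{m}$, which is exactly what makes your tightness-versus-divisibility computation (and your mod-$m$ bookkeeping, where the displayed identity should read $k_i + k_{(j-i)\bmod m} + 1 = k_j$ in the wraparound case) go through. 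One further point to make explicit: Theorem~\ref{t:facetdesc} attaches to a face a poset on $\ZZ_m/H$, so for ``the Kunz poset of $F$'' to be comparable with $\mathcal A(S;m)$ as a poset on $\ZZ_m$ you must know the Kunz subgroup $H$ is trivial; this is immediate in your setup because the interior point $\bigl(\tfrac{a_1}{m},\ldots,\tfrac{a_{m-1}}{m}\bigr)$ has all coordinates strictly positive, but the paper states this step explicitly and your write-up should too.
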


\begin{proof}
As in the proof of \cite[Theorem~3.10]{wilfmultiplicity}, the translation $\mathcal C(\ZZ_m) \to P_m$ is given by
\[
x \mapsto x + (-\tfrac{1}{m}, \ldots, -\tfrac{m-1}{m}),
\]
a fact which can be readily checked by substituting into the defining inequalities of~$P_m$.  

For the second claim, note that if a face $F \subset \mathcal{C}(\ZZ_m)$ has nontrivial Kunz subgroup $H \subset \ZZ_m$, then some coordinate of the corresponding face $F'$ of $P_m$ must be negative throughout $F'$.  As such, any face $F'$ containing semigroups has trivial Kunz subgroup, and so the result now follows from \cite[Theorem~3.10]{wilfmultiplicity}.  
\end{proof}

The following gives a method to identify semigroups in the group cone $\mathcal C(\ZZ_m)$ directly.  

\begin{cor}\label{c:aperypoints}
Fix a point $(z_1, \ldots, z_{m-1}) \in \ZZ_{\ge 0}^{m-1}$, let $a_i = z_im + i$ for each $i$, and fix a face $F \subset P_m$.  The following are equivalent:
\begin{enumerate}[(i)]
\item 
$(z_1, \ldots, z_{m-1}) \in F$; and

\item 
$(a_1, \ldots, a_{m-1})$ lies in the face $F' \subset \mathcal C(\ZZ_m)$ corresponding to $F$.  

\end{enumerate}
In both cases, $\{0, a_1, \ldots, a_{m-1}\}$ is the Ap\'ery set of a numerical semigroup.  
\end{cor}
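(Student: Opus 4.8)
The plan is to piece everything together from Theorem~\ref{t:kunzfaces} and Theorem~\ref{t:kunzbijection}. Theorem~\ref{t:kunzfaces} presents $P_m$ as the translate of $\mathcal C(\ZZ_m)$ under the map $T\colon x \mapsto x + (-\tfrac1m, \ldots, -\tfrac{m-1}{m})$, so the inverse translation $T^{-1}\colon P_m \to \mathcal C(\ZZ_m)$ sends $z = (z_1, \ldots, z_{m-1})$ to $z + (\tfrac1m, \ldots, \tfrac{m-1}{m})$. The one computation to record is that
\[
T^{-1}(z) = \bigl(z_1 + \tfrac1m, \ldots, z_{m-1} + \tfrac{m-1}{m}\bigr) = \tfrac1m\bigl(z_1 m + 1, \ldots, z_{m-1}m + (m-1)\bigr) = \tfrac1m(a_1, \ldots, a_{m-1}).
\]
Since $T$ is an affine isomorphism, it carries the face lattice of $\mathcal C(\ZZ_m)$ bijectively onto that of $P_m$, and by definition the face $F' \subset \mathcal C(\ZZ_m)$ corresponding to $F$ is $T^{-1}(F)$. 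Hence $(z_1, \ldots, z_{m-1}) \in F$ if and only if $\tfrac1m(a_1, \ldots, a_{m-1}) \in F'$.

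Next I would bridge the gap between $\tfrac1m(a_1, \ldots, a_{m-1}) \in F'$ and $(a_1, \ldots, a_{m-1}) \in F'$. Because $\mathcal C(\ZZ_m)$ is cut out by the homogeneous inequalities of Definition~\ref{d:groupcone}, every face $F'$ is the intersection of $\mathcal C(\ZZ_m)$ with the collection of homogeneous hyperplanes given by its facet equations; in particular $F'$ is stable under multiplication by positive scalars. Therefore $\tfrac1m(a_1, \ldots, a_{m-1}) \in F'$ exactly when $(a_1, \ldots, a_{m-1}) \in F'$, and combining this with the previous paragraph gives the equivalence of~(i) and~(ii). This dilation step is the only place where care is needed; it is precisely where the homogeneity of the defining inequalities of $\mathcal C(\ZZ_m)$, emphasized in the preamble to this section, gets used.

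For the final assertion, observe that either~(i) or~(ii) forces $(z_1, \ldots, z_{m-1}) \in F \subseteq P_m$, and by hypothesis this is an integer point of $P_m$. By Theorem~\ref{t:kunzbijection}(b) we then have $(z_1, \ldots, z_{m-1}) = \KV_m(S)$ for some numerical semigroup $S$ containing $m$, and by the definition of the Kunz coordinate vector the Ap\'ery set of $S$ with respect to $m$ is exactly $\{0, a_1, \ldots, a_{m-1}\}$ with $a_i = z_i m + i$. Thus $\{0, a_1, \ldots, a_{m-1}\}$ is the Ap\'ery set of a numerical semigroup, completing the proof. I do not expect any serious obstacle here: once Theorems~\ref{t:kunzfaces} and~\ref{t:kunzbijection} are granted, the corollary is bookkeeping, with the scaling argument of the second paragraph being the single substantive point.
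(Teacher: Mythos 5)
Your argument is correct and is essentially the paper's own (the paper simply says the claim "follows immediately upon checking the appropriate facet equations"): your translation computation $T^{-1}(z) = \tfrac1m(a_1,\ldots,a_{m-1})$ together with the homogeneity of the cone's facet equations is exactly that check, and the final assertion via Theorem~\ref{t:kunzbijection}(b) and the definition of $\KV_m$ is the intended reason. No gaps; this is just a fleshed-out version of the same route.
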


\begin{proof}
This follows immediately upon checking the appropriate facet equations.  
\end{proof}

\begin{notation}\label{n:semigroupinface}
In analogy with Notation~\ref{n:semigroupinface0}, we say $S$ lies in the face $F' \subset \mathcal C(\ZZ_m)$ corresponding to $F$ if $S$ satisfies the conditions of Corollary~\ref{c:aperypoints}.
\end{notation}

The action of $\Aut(G)$ on $\mathcal C(G)$ given by coordinate permutation induces an action on the face lattice of $\mathcal C(G)$, and consequently on the face lattice of $P_m$.  The following result implies the property ``has a numerical semigroup'' is preserved by this action.  

\begin{cor}\label{c:latticepointsymmetry}
A face of $P_m$ contains numerical semigroups if and only if every face in its orbit under the action of $(\ZZ_m)^* \cong \Aut(\ZZ_m)$ on $P_m$ contains numerical~semigroups.  
\end{cor}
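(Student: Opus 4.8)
We want to show that a face $F \subset P_m$ contains a numerical semigroup if and only if every face in its $(\ZZ_m)^*$-orbit does. One direction is trivial: if every face in the orbit contains a semigroup, then in particular $F$ does (the orbit contains $F$). So the content is in the forward direction: assuming $F$ contains some numerical semigroup $S$, we must produce, for each automorphism $\phi \in \Aut(\ZZ_m) = (\ZZ_m)^*$, a numerical semigroup in $\phi \cdot F$.

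**Key steps.** The strategy is to pass to the group cone, apply the automorphism there, and pass back. First I would translate $F$ to the corresponding face $F' \subset \mathcal C(\ZZ_m)$ under Theorem~\ref{t:kunzfaces}, and use Corollary~\ref{c:aperypoints}: since $S \in F$, the point $(a_1, \ldots, a_{m-1})$ with $a_i = k_i m + i$ (the Apéry set of $S$) lies in $F'$, and $\{0, a_1, \ldots, a_{m-1}\}$ is the Apéry set of $S$. Next, fix $\phi \in (\ZZ_m)^*$, say multiplication by $u$ with $\gcd(u,m)=1$. The automorphism permutes the coordinates of $\mathcal C(\ZZ_m)$ by sending coordinate $a$ to coordinate $\phi(a)$, carrying $F'$ to $\phi \cdot F'$ and the point $(a_1,\ldots,a_{m-1})$ to the point $y$ with $y_{ui} = a_i$, equivalently $y_i = a_{u^{-1}i}$ where $u^{-1}$ is the inverse of $u$ mod $m$. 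This $y$ lies in $\phi \cdot F'$ by Remark~\ref{r:automorphisms}. The crucial observation is that $y$ is again an Apéry-type point: the set $\{0\} \cup \{a_{u^{-1}i} : i\}$ is just $\{0, a_1, \ldots, a_{m-1}\} = \Ap(S;m)$ relabeled, and the defining inequalities of $\mathcal C(\ZZ_m)$ that $y$ satisfies are exactly the permuted ones — so by Corollary~\ref{c:aperypoints} (applied in reverse, reading off the integer point $z'$ with $z'_i m + i = y_i$), the set $\{0\} \cup \{y_i\}$ is the Apéry set of some numerical semigroup $T$, and $z' = \KV_m(T) \in \phi \cdot F$. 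Hence $\phi \cdot F$ contains a numerical semigroup, completing the argument. Replacing $F$ by $\phi \cdot F$ and running the argument with $\phi^{-1}$ shows the equivalence is genuinely symmetric across the orbit.

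**The main obstacle.** The only subtle point is verifying that applying the coordinate permutation to an Apéry-type point of $\mathcal C(\ZZ_m)$ again yields an Apéry-type point — i.e., that the image point still lies in $\mathcal C(\ZZ_m)$ in the ``integer-point-of-$P_m$'' sense and that its associated set $\{0, y_1, \ldots, y_{m-1}\}$ is genuinely the Apéry set of a numerical semigroup, not merely a solution to the inequalities. This is where one must be slightly careful: the point $y$ has $y_i = a_{u^{-1}i} = k_{u^{-1}i} m + u^{-1}i$, which is \emph{not} of the form (nonnegative integer)$\cdot m + i$, so one cannot directly read off $\KV_m$ from $y$ — instead one should work with the \emph{group cone} point $y$ and invoke the second sentence of Corollary~\ref{c:aperypoints}, which asserts that any integer point of $F' \subset \mathcal C(\ZZ_m)$ corresponds to an Apéry set. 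Concretely: $y$ is an integer point of $\mathcal C(\ZZ_m)$ lying in the face $\phi \cdot F'$; since $F'$ contains a semigroup and automorphisms carry semigroups to semigroups at the level of the defining structure, the integer point $y$ (being of the requisite residue-class form, as $\{y_i \bmod m\}$ runs over all nonzero residues) yields via Corollary~\ref{c:aperypoints} a numerical semigroup in $\phi \cdot F$. Once this bookkeeping is set up, the proof is a short diagram-chase through Theorem~\ref{t:kunzfaces}, Corollary~\ref{c:aperypoints}, and Remark~\ref{r:automorphisms}.
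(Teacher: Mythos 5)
Your high-level route --- pass to the group cone via Corollary~\ref{c:aperypoints}, act by the automorphism as in Remark~\ref{r:automorphisms}, then come back to $P_m$ --- is the same as the paper's, and you correctly identify the obstacle; but your resolution of it fails, and that obstacle is exactly where the paper's one real idea lives. After permuting, the point $y$ with $y_i = a_{u^{-1}i}$ satisfies $y_i \equiv u^{-1}i \bmod m$, not $y_i \equiv i \bmod m$. Corollary~\ref{c:aperypoints} does \emph{not} say that an arbitrary integer point of a face of $\mathcal C(\ZZ_m)$ corresponds to an Ap\'ery set: it applies only to points whose $i$-th coordinate has the form $z_i m + i$, because the translation in Theorem~\ref{t:kunzfaces} subtracts $\tfrac{i}{m}$ from the $i$-th coordinate. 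So $y$ does not correspond to any integer point of $P_m$ at all (unless $u=1$), let alone to one in the image face. Worse, the set $\{0, y_1, \ldots, y_{m-1}\}$ is literally $\Ap(S;m)$ again, so the only semigroup it could possibly encode is $S$ itself, whose Kunz coordinates lie in $F$, not in the translated face; and the phrase ``$\{y_i \bmod m\}$ runs over all nonzero residues'' is not the requisite condition --- the residue must match the coordinate index. The appeal to ``automorphisms carry semigroups to semigroups at the level of the defining structure'' is circular, since that is precisely what the corollary asserts.

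The missing step is a scaling. Choose positive integers $g, h$ with $gh \equiv 1 \bmod m$ representing the unit and its inverse, and replace the permuted point $(a_h, a_{2h}, \ldots, a_{(m-1)h})$ by $(ga_h, ga_{2h}, \ldots, ga_{(m-1)h})$. Faces of the cone $\mathcal C(\ZZ_m)$ are closed under positive scaling, so this point lies in the same face as the permuted one, and now $ga_{ih} \equiv g\,ih \equiv i \bmod m$, so Corollary~\ref{c:aperypoints} applies and exhibits $\{0, ga_h, \ldots, ga_{(m-1)h}\}$ (a genuinely different set, namely $g \cdot \Ap(S;m) \cup \{0\}$) as the Ap\'ery set of a numerical semigroup whose Kunz coordinates lie in the image face of $P_m$. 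This is also why the statement concerns the action of $(\ZZ_m)^*$ rather than arbitrary coordinate permutations: the scaling trick requires the permutation to be multiplication by $h$. With this step inserted, your argument becomes the paper's proof; without it, the forward direction is not established.
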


\begin{proof}
Fix $g, h \in \ZZ_{\ge 1}$ with $gh \equiv 1 \bmod m$.  Suppose $S$ is a numerical semigroup in $F$ with Ap\'ery set $\Ap(S;m) = \{0, a_1, \ldots, a_{m-1}\}$.  By Corollary~\ref{c:aperypoints}, $a = (a_1, \ldots, a_{m-1})$ lies in the corresponding face of $\mathcal C(\ZZ_m)$.  Acting on $a$ by $\overline g \in \Aut(\ZZ_m)$ yields the point $a' = (a_{h}, a_{2h}, \ldots, a_{(m-1)h})$, and scaling $a'$ by $g$ yields $(ga_{h}, ga_{2h}, \ldots, ga_{(m-1)h})$ in the same face as $a'$, where all of the subscripts are taken modulo $m$.  Moreover, we see
$$ga_{ih} \equiv gih \equiv i \bmod m,$$
so Corollary~\ref{c:aperypoints} implies that $\{0, ga_{h}, ga_{2h}, \ldots, ga_{(m-1)h}\}$ is the Ap\'ery set of some numerical semigroup in the appropriate face of $P_m$.  
\end{proof}

Corollary~\ref{c:oversemigrouptokunz} implies that in classifying the possible posets $\mathcal A(S;n)$ for fixed $n$, it suffices to consider semigroups with $\mathsf m(S) = n$. 

\begin{cor}\label{c:oversemigrouptokunz}
Given a numerical semigroup $S$ and any element $n \in S$, there exists a numerical semigroup $T$ with $\mathsf m(T) = n$ and $\mathcal A(T;n) = \mathcal A(S;n)$.  
\end{cor}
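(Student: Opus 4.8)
The plan is to exhibit $T$ by moving the Kunz coordinate vector of $S$ far into the positive orthant along a ``cone'' direction, so that it stays on the same face of the Kunz polyhedron; Theorems~\ref{t:kunzbijection} and~\ref{t:kunzfaces} then supply both required properties simultaneously. Set $z = \KV_n(S)$, write $\Ap(S;n) = \{0,a_1,\dots,a_{n-1}\}$ with $a_i \equiv i \pmod n$ (so $a_i = z_i n + i$), and let $F \subset P_n$ be the unique face containing $z$ in its relative interior; by Theorem~\ref{t:kunzfaces}, $\mathcal A(S;n)$ is the Kunz poset of $F$. Note that $z_i = (a_i - i)/n \ge 0$ since $a_i \ge i$, so $z$ lies in the closed positive orthant.

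For an integer $N \ge 1$ I would take
\[
z' \;=\; (Nn+1)\,z + N\,(1,2,\dots,n-1).
\]
The point of this formula is that $z \mapsto z'$ is the conjugate of the dilation $x \mapsto (Nn+1)x$ by the affine isomorphism $P_n \to \mathcal C(\ZZ_n)$ of Theorem~\ref{t:kunzfaces}: writing $c = (\tfrac1n,\dots,\tfrac{n-1}n)$, we have $z \mapsto z+c \mapsto (Nn+1)(z+c) \mapsto (Nn+1)(z+c)-c = (Nn+1)z + Nn\,c = z'$. Because $\mathcal C(\ZZ_n)$ is a pointed cone, it and each of its faces are invariant under positive dilation, so $x \mapsto (Nn+1)x$ fixes every face setwise and hence restricts to a homeomorphism of each relative interior; conjugating, $z \mapsto z'$ fixes the face $F$ and maps $\operatorname{relint}(F)$ onto itself. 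Thus $z' \in \operatorname{relint}(F) \subseteq P_n$, and since $z_i \ge 0$ each coordinate of $z'$ equals $(Nn+1)z_i + Ni \ge Ni \ge 1$, so in fact $z'$ is an integer point of $P_n' = P_n \cap \RR_{\ge 1}^{n-1}$.

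Now I would read off the conclusion. By Theorem~\ref{t:kunzbijection}(a), the integer point $z' \in P_n'$ is $\KV_n(T)$ for a (unique) numerical semigroup $T$ with $\mathsf m(T) = n$. Since $\KV_n(T) = z' \in \operatorname{relint}(F)$, Theorem~\ref{t:kunzfaces} gives $\mathcal A(T;n) = (\text{Kunz poset of } F) = \mathcal A(S;n)$, as required. (Alternatively, one can argue entirely inside $\mathcal C(\ZZ_n)$: the tuple $((Nn+1)a_1,\dots,(Nn+1)a_{n-1})$ is a positive multiple of the point of $\mathcal C(\ZZ_n)$ attached to $z$, hence lies in the relative interior of the face corresponding to $F$, and its entries are $\equiv i \pmod n$ and exceed $n$, so Corollary~\ref{c:aperypoints} identifies it with the nonzero part of $\Ap(T;n)$.)

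The one step requiring care is the middle paragraph: confirming that the explicit affine map $z \mapsto z'$ is precisely this conjugated dilation, and that positive dilation of a pointed cone preserves each face and therefore each relative interior. Once that is in hand, the congruence and positivity of the coordinates of $z'$, together with the two invocations of the bijection and the face correspondence, are immediate, and everything else is bookkeeping.
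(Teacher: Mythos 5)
Your proposal is correct and is essentially the paper's own argument: your point $z' = (Nn+1)z + N(1,\ldots,n-1)$ is exactly $\KV_n(S)$ plus $Nn$ times the difference between $\KV_n(S)$ and the vertex of $P_n$, which is precisely how the paper produces an integer point of $P_n$ with all positive entries before invoking Theorem~\ref{t:kunzbijection}. You simply spell out the face-preservation and relative-interior details (via the conjugated dilation of the pointed cone $\mathcal C(\ZZ_n)$ and Theorem~\ref{t:kunzfaces}) that the paper leaves implicit.
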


\begin{proof}
This follows from Theorem~\ref{t:kunzbijection}, as the vector difference of $\KV_n(S)$ and the vertex of $P_n$ must have all positive entries, so adding a multiple of this difference to $\KV_n(S)$ yields an integer point with all positive entries.  
\end{proof}

\begin{remark}\label{r:nokunzpts}
There are two reasons why a face $F \subset P_m$ may fail to contain any points corresponding to numerical semigroups.  The first is that some coordinates are negative throughout $F$; such faces are fully characterized by Theorem~\ref{t:facetdesc}.  The second is that $F$ contains positive rational points but no integer points, a property that is also reflected in the corresponding Kunz poset; Example~\ref{e:m6kunz} demonstrates this.  
\end{remark}

\begin{example}\label{e:m6kunz}

Let $m = 6$.  The Kunz polyhedron $P_6$ is obtained by translating $\mathcal C(\ZZ_6)$ by the vector $v = (-\tfrac{1}{6}, - \tfrac{2}{6}, -\tfrac{3}{6}, -\tfrac{4}{6}, -\tfrac{5}{6})$, so Example~\ref{e:m6posets} implies $P_6$ has $11$ extremal rays.  Of these rays, only the $2$ with vector directions $(1,2,3,4,5)$ and $(5,4,3,2,1)$ contain integer points.  These correspond to the rays of $\mathcal C(\ZZ_6)$ whose Kunz posets are total orderings; see Figure~\ref{f:m6posets}.

The $3$ rays of $P_6$ that correspond to rays of $\mathcal C(\ZZ_6)$ that are not listed in Figure~\ref{f:m6posets} each have a coordinate that is always negative.  For example, in the face $v + r\RR_{\ge 0}$ with vector direction $r = (1,0,1,0,1)$, every vector has second coordinate $-\tfrac{2}{6}$.


The remaining $6$ rays of $P_6$ contain points with all positive entries but still do not contain integer points.  This can be verified numerically from the coordinates of the vector direction of each ray, but can also be verified by proving that the corresponding posets in Figure~\ref{f:m6posets} cannot occur as the Ap\'ery poset of a numerical semigroup.  
\begin{enumerate}[(i)]
\item 
The poset for the ray with vector direction $(1,2,3,4,2)$ has $2 \preceq 4$ and $5 \preceq 4$, meaning an Ap\'ery set $\{0, a_1, \ldots, a_5\}$ with this divisibility poset would have to satisfy $2a_2 = a_4 = 2a_5$. This is impossible since $a_2$ and $a_5$ are distinct modulo~$6$.  

\item 
The poset for the ray with vector direction $(1,2,3,1,2)$ poses a similar issue since both $1 \preceq 2$ and $4 \preceq 2$ must hold.  

\item 
The poset for the ray with vector direction $(1,2,3,2,1)$ cannot be the Ap\'ery poset of a numerical semigroup since such an Ap\'ery set $\{0, a_1, \ldots, a_5\}$ would have to satisfy $a_3 = a_1 + a_2 = 3a_1$ as well as $a_3 = a_4 + a_5 = 3a_5$. This is impossible since $a_1 \ne a_5$.  

\item 
Suppose that the poset for the ray with vector direction $(1,2,1,2,1)$ occurred as the Ap\'ery poset of a numerical semigroup with Ap\'ery set $\{0,a_1,a_2,a_3,a_4,a_5\}$.  Since we have $a_2 = 2a_1 = a_3 + a_5$, either $a_3 < a_1 < a_5$ or $a_5 < a_1 < a_3$.  In either case, it is impossible to have $a_4 = 2a_5 = a_1 + a_3$.  This gives a contradiction.
\end{enumerate}
Corollary~\ref{c:latticepointsymmetry} implies the remaining $2$ rays of $P_6$ also contain no integer points.  
\end{example}

\begin{prob}\label{prob:nokunzpts}
Characterize, in terms of its Kunz poset, the conditions under which a given face $F$ contains numerical semigroups.  
\end{prob}

Throughout this section, we only utilize group cones $\mathcal C(G)$ for cyclic $G$.  

\begin{prob}\label{q:noncyclic}
Is there an analogue of Corollary~\ref{c:aperypoints} for $\mathcal C(G)$ when $G$ is not necessarily cyclic?  In particular, is there some family of semigroups in natural bijection with the integer points in some translation of $\mathcal C(G)$?  
\end{prob}

\section{Faces of the oversemigroup cone}
\label{sec:oversemigroupfaces}



In this section, we give a definition of the oversemigroup cone $O_n \subset \RR^n$ and explain the correspondence between integer points of $O_n$ and numerical semigroups containing~$n$.  We then give a connection between $O_n$ and the group cone $\mathcal{C}(\ZZ_n)$. Combining this with the results of the previous section gives a correspondence between $O_n$ and the Kunz polyhedron $P_n$.

\begin{defn}
For $n \ge 2$, the \emph{oversemigroup cone} $O_n \subset \RR^n$ is the set of points $(y_1, \ldots, y_n)$ satisfying
\begin{align*}
y_i + y_j \le y_{i+j}       &, \text{ for all } 1 \le i \le j \le n - 1 \text{ with } \ i + j < n, \text{ and} \\
y_i + y_j \le y_{i+j-n} + y_n &, \text{ for all } 1 \le i \le j \le n - 1 \text{ with } \ i + j > n.
\end{align*}
We also set notation for a particular face of the cone $O_n$, defining $O_n' \subset \RR^n$ by
\[
O_n' = O_n \cap \{y_1 = 0\}.
\]
\end{defn}
\noindent Note that we can view $O_n'$ as a cone in $\RR^{n-1}$.

\begin{prop}[{\cite[Lemma~4.1]{oversemigroupcone}}]\label{p:oversemigrouppts}
Fix $q, n \ge 1$ with $\gcd(n,q) = 1$.  Every integer point $y \in O_n$ with $y_n = q$ naturally corresponds to a numerical semigroup $S$ containing $n$ and $q$ with Ap\'ery set
\[
\Ap(S;n) = \{q - ny_1, \, 2q - ny_2, \, \ldots, \, (n-1)q - ny_{n-1}, \, 0\}.
\]
\end{prop}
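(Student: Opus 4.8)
The plan is to read a candidate Ap\'ery set directly off of $y$ and then verify the numerical semigroup axioms. Set $a_i = iq - ny_i$ for $1 \le i \le n-1$, put $a_0 = 0$, and define $S = \bigcup_{i=0}^{n-1}(a_i + n\ZZ_{\ge 0})$. Since $a_i \equiv iq \pmod n$ and $\gcd(n,q) = 1$, the values $a_0, a_1, \ldots, a_{n-1}$ are pairwise distinct modulo $n$, with $a_0$ the unique one lying in the residue class of $0$. In particular $S$ meets each residue class mod $n$ in exactly one set of the form $a_i + n\ZZ_{\ge 0}$; since $a_0 = 0$ this gives $n\ZZ_{\ge 0} \subseteq S$, so $n \in S$, and once we know $a_i \ge 0$ for all $i$ it follows that $\ZZ_{\ge 0} \setminus S \subseteq \{0, 1, \ldots, \max_i a_i\}$ is finite.

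The crux is the observation that, for $y \in O_n$, the map $\ZZ/n\ZZ \to \ZZ$ sending the class of $i$ to $a_i$ is \emph{subadditive}: $a_i + a_j \ge a_{(i+j)\bmod n}$ for all $i, j$. Substituting $a_k = kq - ny_k$ converts $a_i + a_j \ge a_{i+j}$ (when $i+j < n$) and $a_i + a_j \ge a_{i+j-n}$ (when $i+j > n$) into precisely the two families of inequalities defining $O_n$, while the remaining ``diagonal'' instances $i + j = n$ read $a_i + a_{n-i} \ge 0$, equivalently $y_i + y_{n-i} \le y_n$; for $n \ge 3$ these follow from the listed inequalities by the telescoping $y_i + y_{n-i} + y_1 \le y_i + y_{n-i+1} \le y_1 + y_n$ (with a small variant at $i = 1$, using $2y_1 \le y_2$). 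Granting subadditivity, non-negativity of the $a_i$ is automatic: if $d$ is the order of the class of $i$ in $\ZZ/n\ZZ$, then iterated subadditivity gives $0 = a_0 = a_{di} \le d\,a_i$, so $a_i \ge 0$; moreover $a_i \ne 0$ for $i \ne 0$ because $a_i \equiv iq \not\equiv 0 \pmod n$, so in fact $a_i > 0$.

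It remains to confirm that $S$ is a numerical semigroup with the asserted Ap\'ery set. Closure under addition is again subadditivity: if $a_i + kn$ and $a_j + \ell n$ lie in $S$, their sum is $a_i + a_j + (k+\ell)n$, which belongs to $a_{(i+j)\bmod n} + n\ZZ_{\ge 0} \subseteq S$ since $a_i + a_j \ge a_{(i+j)\bmod n}$ and the two are congruent modulo $n$. Together with cofiniteness this shows $S$ is a numerical semigroup containing $n$ (and $q$, as $q$ lies in the residue class of $a_1$ with $q \ge a_1$). Finally, the elements of $S$ in the residue class of $iq$ are exactly $a_i + n\ZZ_{\ge 0}$, whose least element is $a_i$, so $a_i - n \notin S$ and hence $a_i \in \Ap(S;n)$; as $\Ap(S;n)$ contains exactly one element from each residue class, we conclude $\Ap(S;n) = \{a_0, a_1, \ldots, a_{n-1}\}$, which is the stated set after reindexing.

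The step I expect to require the most care is the passage from membership in $O_n$ to full subadditivity on $\ZZ/n\ZZ$: the defining inequalities of $O_n$ omit the diagonal relations $y_i + y_{n-i} \le y_n$, and these (equivalently $a_i + a_{n-i} \ge 0$) are exactly what powers the ``subadditivity forces non-negativity'' step. Recovering them from the listed inequalities when $n \ge 3$ is the only genuinely computational part; the case $n = 2$ is degenerate and can be handled directly. Once subadditivity is in hand, everything else is formal.
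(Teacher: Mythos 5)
The paper itself supplies no argument here --- it quotes the statement from \cite[Lemma~4.1]{oversemigroupcone} --- so your proposal is being measured against the cited source rather than an in-paper proof. The bulk of what you do is correct and is essentially the right argument: reading off $a_i = iq - ny_i$, observing that the two families of defining inequalities of $O_n$ are exactly the non-diagonal subadditivity conditions $a_i + a_j \ge a_{(i+j)\bmod n}$, recovering the missing diagonal conditions $y_i + y_{n-i} \le y_n$ for $n \ge 3$ by your telescoping trick, deducing $a_i > 0$ from iterated subadditivity, and then getting closure, cofiniteness, and the Ap\'ery identification formally. That part is sound.

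The genuine gap is the parenthetical ``(and $q$, as $q$ lies in the residue class of $a_1$ with $q \ge a_1$)''. The inequality $q \ge a_1$ is equivalent to $y_1 \ge 0$, and $y_1 \ge 0$ is \emph{not} a consequence of the listed inequalities: the whole line $\RR(1,2,\ldots,n)$ satisfies every defining inequality of $O_n$ with equality, so $O_n$ as written contains points with $y_1 < 0$ and prescribed $y_n$. Concretely, $y = (-1,-2,4) \in O_3$ is an integer point with $y_3 = q = 4$ coprime to $3$; your construction gives $a_1 = 7$, $a_2 = 14$, hence $S = \langle 3, 7\rangle$, which contains $n=3$ but not $q=4$. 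So the step ``$q \ge a_1$'' fails exactly where the literal statement needs the implicit hypothesis $y_1 \ge 0$ (automatic in the setting of \cite{oversemigroupcone}, where the points parametrize oversemigroups of $\langle n,q\rangle$, and consistent with the role $y_1$ plays in Proposition~\ref{p:nonaperyray}). Relatedly, your claim that $n = 2$ ``can be handled directly'' is not right under the definition used here: for $n=2$ both families of inequalities are vacuous, so $O_2 = \RR^2$, and a point such as $y = (q,q)$ produces the negative entry $q - 2y_1 = -q$, which is not an Ap\'ery set element of any numerical semigroup. The fix is to state and use the extra hypothesis $y_1 \ge 0$ (or restrict to the points actually arising from oversemigroups of $\langle n, q\rangle$, as in the cited lemma); with that in place, your argument goes through and everything else you wrote stands.
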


Unlike the Kunz polyhedron, any numerical semigroup $S$ with $n \in S$ corresponds to infinitely many points in $O_n$, one for each $q \in S$ with $\gcd(n,q) = 1$.  Some of this redundancy is handled by the following proposition.  

\begin{prop}\label{p:nonaperyray}
Fix $n \ge 2$.  Every $y \in O_n$ can be written uniquely in the form
$$y = y' + y_1(1, 2, \ldots, n)$$
where $y' \in O_n$ and $y_1' = 0$.  Moreover, if $y$ corresponds to a numerical semigroup~$S$, then $y'$ also corresponds to $S$, and $y_n \in \Ap(S;n)$ if and only if $y_1 = 0$.  
\end{prop}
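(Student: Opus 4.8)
The plan is to observe that the vector $r = (1, 2, \ldots, n)$ spans a line lying in the lineality space of the cone $O_n$, and then read off every assertion from that fact together with Proposition~\ref{p:oversemigrouppts}. First I would check directly that $r$ satisfies every defining inequality of $O_n$ with equality: for indices with $i+j<n$ one has $i+j=i+j$, and for indices with $i+j>n$ one has $i+j=(i+j-n)+n$. Since the inequalities defining $O_n$ are homogeneous, it follows that for any $y \in O_n$ and any $t \in \RR$ the slack of each defining inequality is unchanged upon passing from $y$ to $y+tr$, so $y+tr \in O_n$ as well.

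For the uniqueness and existence of the stated decomposition, I would use that the first coordinate of $r$ equals $1$: a representation $y = y' + c\,r$ with $y_1' = 0$ forces, by comparing first coordinates, $c = y_1$ and hence $y' = y - y_1 r$; conversely this $y'$ lies in $O_n$ by the lineality observation and has vanishing first coordinate. This disposes of the first sentence of the proposition.

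For the ``moreover'' part, suppose $y$ is an integer point corresponding to a numerical semigroup $S$, so that by Proposition~\ref{p:oversemigrouppts} we have $q := y_n \ge 1$ with $\gcd(n,q)=1$ and
\[
\Ap(S;n) = \{\, iq - n y_i : 1 \le i \le n-1 \,\} \cup \{0\}.
\]
Since $y_1 \in \ZZ$, the point $y' = y - y_1 r$ is again an integer point of $O_n$, now with $n$-th coordinate $q' := q - n y_1$. I would then check that $y'$ falls under the hypotheses of Proposition~\ref{p:oversemigrouppts}: since $q' \equiv q \pmod n$ we get $\gcd(n,q')=1$, and since $n \ge 2$ also $q' \not\equiv 0 \pmod n$; moreover $q' = q - n y_1$ is exactly the element of $\Ap(S;n)$ in the residue class of $q$, hence a positive integer. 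Applying Proposition~\ref{p:oversemigrouppts} to $y'$ produces a semigroup $S'$ whose Apéry set has $i$-th listed element $iq' - n y_i' = i(q - n y_1) - n(y_i - i y_1) = iq - n y_i$, so $\Ap(S';n) = \Ap(S;n)$ and therefore $S' = S$. Finally, $y_n = q$ lies in $\Ap(S;n)$ if and only if it coincides with the unique element $q - n y_1$ of $\Ap(S;n)$ in its residue class, i.e.\ if and only if $y_1 = 0$.

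I expect the only mildly delicate point to be verifying that $y'$ genuinely satisfies the hypotheses of Proposition~\ref{p:oversemigrouppts} — in particular that its $n$-th coordinate $q - n y_1$ is a \emph{positive} integer — which is why I would extract this from the Apéry-set description of $S$ rather than trying to bound $y_1$ directly from the inequalities cutting out $O_n$. The remaining ingredients are routine: the one-line lineality computation for $r$, and the identity $iq' - n y_i' = iq - n y_i$.
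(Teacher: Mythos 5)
Your proposal is correct and follows essentially the same route as the paper: verify that $(1,2,\ldots,n)$ satisfies every defining inequality of $O_n$ with equality (the paper writes out the same cancellation directly for $y - y_1(1,2,\ldots,n)$), read off uniqueness from the first coordinate, and then apply Proposition~\ref{p:oversemigrouppts} to $y'$ together with the identity $iy_n' - ny_i' = iy_n - ny_i$ to conclude $\Ap(S';n) = \Ap(S;n)$ and the Ap\'ery criterion for $y_1 = 0$. Your extra check that $y_n' = y_n - ny_1$ is a positive integer (being the Ap\'ery set element in the class of $y_n$) is a point the paper passes over silently, and it is handled correctly.
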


\begin{proof}
If $y \in O_n$, then $y' = y - y_1(1, 2, \ldots, n) \in O_n$, since for $i + j < n$ we have
\[
(y_i - iy_1) + (y_j - jy_1) = y_i + y_j - (i + j)y_1 \le y_{i+j} - (i + j)y_1
\]
and for $i + j > n$ we have
\[
(y_i - iy_1) + (y_j - jy_1) = y_i + y_j - (i + j)y_1 \le (y_{i+j-n} - (i + j - n)y_1) + (y_n - ny_1).
\]
This proves the first claim.  For the second claim, since 
$$\gcd(y_n', n) = \gcd(y_n - y_1n, n) = \gcd(y_n, n) = 1$$
we know $y'$ must correspond to some numerical semigroup $S'$ under Proposition~\ref{p:oversemigrouppts}.  Moreover, for each $i = 1, \ldots, n$, we have
$$iy_n' - ny_i' = i(y_n - ny_1) - n(y_i - iy_1) = iy_n - ny_i$$
so $\Ap(S';n) = \Ap(S;n)$ and thus $S = S'$.  The final claim follows from Proposition~\ref{p:oversemigrouppts}, since $y_n - y_1n \in \Ap(S;n)$ implies that $y_n \in \Ap(S;n)$ if and only if $y_1 = 0$.  
\end{proof}

By Proposition~\ref{p:nonaperyray}, $(1, 2, \ldots, n) \in O_n$ is the only ray with positive first coordinate.  Every face of $O_n$ that is not contained in $O_n'$ is simply the Minkowski sum of some face of $O_n'$ with the ray $(1, 2, \ldots, n)$.  This implies that in order to characterize the face lattice of $O_n$ it suffices to characterize the face lattice of $O_n'$.  
In the theorem below, we choose to think of $O_n'$ as a cone in $\RR^{n-1}$.

\begin{thm}\label{t:oversemigroupfaces}
For each $n \ge 2$, the linear map  $(y_2, \ldots, y_n) \mapsto (x_1, \ldots, x_{n-1})$ given by
\[
x_1 = \tfrac{1}{n}y_n, \qquad x_2 = \tfrac{2}{n}y_n - y_2, \qquad x_3 = \tfrac{3}{n}y_n - y_3, \qquad \ldots, \qquad x_{n-1} = \tfrac{n-1}{n}y_n - y_{n-1}
\]
maps $O_n'$ bijectively onto $\mathcal C(\ZZ_n)$.
Additionally, if a numerical semigroup $S$ corresponds to a point $y$ in the relative interior of some face $F \subset O_n'$, then applying the action of the automorphism $\sigma$ of $\ZZ_n$ with $\sigma(1) = y_n$ to each element of the ground set of $\mathcal A(S;n)$ yields the Kunz-balanced poset of the face of $\mathcal C(\ZZ_n)$ corresponding to $F$.  
\end{thm}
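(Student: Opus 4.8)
The plan is to verify the linear map is a bijection by exhibiting its inverse and checking that the defining inequalities of the two cones are carried to one another, and then to track what the bijection does to facet equations in order to identify the Kunz poset of a face. First I would write down the candidate inverse: given $(x_1,\ldots,x_{n-1}) \in \mathcal C(\ZZ_n)$, set $y_n = nx_1$ and $y_i = ix_1 - x_i$ for $2 \le i \le n-1$ (together with $y_1 = 0$, since we are viewing $O_n'$ inside $\RR^{n-1}$). Substituting into the two families of inequalities defining $O_n'$ — namely $y_i + y_j \le y_{i+j}$ for $i+j<n$ and $y_i + y_j \le y_{i+j-n} + y_n$ for $i+j>n$ — and simplifying should turn each of them, after cancellation of the $x_1$ terms, precisely into the single family $x_a + x_b \ge x_{a+b}$ for $a,b \in \ZZ_n \setminus \{0\}$, $a+b \ne 0$, i.e. the defining inequalities of $\mathcal C(\ZZ_n)$ from Definition~\ref{d:groupcone}. (One has to be slightly careful with the boundary cases $i = n$ or $i+j = n$, but $y_n = nx_1$ and $y_1 = 0$ make these degenerate correctly; in particular the inequality with $i+j=n$ becomes $0 \le 0$ type information once $x_n$ is interpreted as $0$.) Since the map and its inverse are both linear and send one cone into the other, this establishes the claimed bijection and, being linear, it induces an isomorphism of face lattices: a facet equation $x_a + x_b = x_{a+b}$ of a face of $\mathcal C(\ZZ_n)$ corresponds under the substitution to the facet equation $y_i + y_j = y_{i+j}$ (resp.\ $y_i + y_j = y_{i+j-n} + y_n$) of the matching face of $O_n'$.

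Next I would handle the poset statement. Suppose $S$ corresponds to $y$ in the relative interior of a face $F \subset O_n'$, so in particular $y_1 = 0$ and, by Proposition~\ref{p:nonaperyray}, $y_n \in \Ap(S;n)$. Write $q = y_n$, which is coprime to $n$ by Proposition~\ref{p:oversemigrouppts}, and let $\phi \in \Aut(\ZZ_n)$ be multiplication by $q$ (so $\phi(1) = q$). By Proposition~\ref{p:oversemigrouppts}, $\Ap(S;n) = \{\,iq - ny_i : i = 1,\ldots,n-1\,\} \cup \{0\}$, where the element congruent to $iq \bmod n$ is $a_{iq} := iq - ny_i$. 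The divisibility relation defining $\mathcal A(S;n)$ is: the class of $iq$ precedes the class of $jq$ iff $a_{jq} - a_{iq} \in S$. Since $y$ lies in the relative interior of $F$, the relations $a_{jq} - a_{iq} \in S$ that hold are exactly those forced by equalities among the facet inequalities defining $F$; that is, $a_{iq} + a_{(j-i)q} = a_{jq}$ (reading subscripts mod $n$ and using $a_0 = 0$) holds for $y$ in the relative interior of $F$ precisely when the corresponding $y$-facet equation is in the facet description of $F$. Translating back through the linear map, this is precisely the condition that $x_i \preceq x_j$ — i.e.\ that $\,i \preceq i + (j-i) = j$ — in the Kunz poset $\preceq$ of the corresponding face of $\mathcal C(\ZZ_n)$ as described in Theorem~\ref{t:facetdesc}. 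Composing, the class of $iq$ precedes the class of $jq$ in $\mathcal A(S;n)$ iff $i \preceq j$ in $\preceq$, which says exactly that $\mathcal A(S;n)$ is the image of $\preceq$ under relabeling each ground-set element $i$ by $iq = \phi(i)$, i.e.\ after applying the automorphism $1 \mapsto y_n$ to the ground set. (One should note $\preceq$ here has trivial Kunz subgroup, since $F$ contains points with all coordinates giving a genuine Ap\'ery set, analogously to the argument in the proof of Theorem~\ref{t:kunzfaces}; this ensures $\preceq$ is genuinely a poset on $\ZZ_n$ and not on a proper quotient.)

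The routine-but-attention-demanding part is the first step: getting the substitution and all the boundary cases ($i$ or $j$ equal to $n-1$ with $i+j = n$, the role of the coordinate "$x_n = 0$", and the fact that $O_n'$ is cut out inside $\{y_1 = 0\}$ rather than all of $\RR^n$) to line up exactly, so that the two inequality systems match one-to-one with no leftover or missing inequality. The genuinely delicate point, and the one I would be most careful about, is the poset identification: specifically the claim that for $y$ in the \emph{relative interior} of $F$, the equalities $a_{jq} - a_{iq} \in S$ among Ap\'ery elements are \emph{exactly} those coming from facet equations of $F$ — one must rule out "accidental" divisibility relations in $S$ not recorded by the face, and this is where the hypothesis that $y$ is in the relative interior (not merely in $F$) does the work, together with the dictionary between divisibility of Ap\'ery elements and the linear equalities $x_a + x_b = x_{a+b}$ established in Corollary~\ref{c:aperypoints} and Theorem~\ref{t:facetdesc}. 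I would cite Theorem~\ref{t:kunzfaces} and its proof as the template, since the corresponding statement for $P_m$ is proved there and the present situation differs only by the linear change of coordinates and the automorphism bookkeeping.
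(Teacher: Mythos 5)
Your proposal is correct and follows essentially the same route as the paper: verify the cone bijection by matching the defining inequalities of $O_n'$ and $\mathcal C(\ZZ_n)$ under the (invertible) linear substitution, then use Proposition~\ref{p:oversemigrouppts} to identify $nx_i$ with the Ap\'ery element in class $iy_n$, note positivity/distinctness forces a trivial Kunz subgroup, and match facet equations of the face with divisibility relations in $\Ap(S;n)$ up to the multiplication-by-$y_n$ automorphism. The only cosmetic differences are that you exhibit the explicit inverse map where the paper instead substitutes one way and invokes full-dimensionality of both cones, and you spell out slightly more carefully than the paper why the relative-interior hypothesis makes the divisibility relations exactly the facet equations.
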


\begin{proof}
For clarity of notation we set $y_1 = 0$ for the rest of the proof.  We prove the first part of the statement by comparing the defining hyperplanes for $O_n'$ and for $\mathcal C(\ZZ_n)$.
For~each inequality $x_i + x_j \ge x_{i+j}$ of $\mathcal C(\ZZ_n)$ with $1 \le i \le j \le n-1$, there are 2 cases: 
\begin{enumerate}[(i)]
\item 
if $i + j < n$, then 
\[
\tfrac{i+j}{n}y_n - y_i - y_j = x_i + x_j \ge x_{i+j} = \tfrac{i+j}{n}y_n - y_{i+j},
\]
which simplifies to $y_i + y_j \le y_{i+j}$; and

\item 
if $i + j > n$, then
$$\tfrac{i+j}{n}y_n - y_i - y_j = x_i + x_j \ge x_{i+j} = x_{i+j-n} = \tfrac{i+j-n}{n}y_n - y_{i+j-n},$$
which simplifies to $y_i + y_j \le y_{i+j-n} + y_n$. 

\end{enumerate}
Since $\mathcal C(\ZZ_n)$ and $O_n'$ are both full dimensional cones in $\RR^{n-1}$, this proves the first claim.  

Next, suppose $S$ corresponds to a point $y$ in the relative interior of a face $F$ of $O_n'$.  Under the above transformation, Proposition~\ref{p:oversemigrouppts} implies $\Ap(S;n) = \{0, nx_1, \ldots, nx_{n-1}\}$, (note that the elements may not be written in order modulo $n$).   Elements of $\Ap(S;n)$ are distinct modulo $n$, so each $nx_i$ must be positive.  This means the Kunz subgroup of the image of $F$ in  $\mathcal C(\ZZ_n)$ is trivial.  However, $nx_1 = y_n$, so applying $\sigma$ to each of $nx_1, \ldots, nx_k$ in the Kunz poset $Q$ of $S$ yields the Kunz poset corresponding to the image of $F$ in $\mathcal C(\ZZ_n)$.  In particular, each facet equation $x_i + x_j = x_{i+j}$ of $F$ indicates divisibility as elements of $\Ap(S;n)$, so we obtain $\sigma^{-1}(i) \preceq \sigma^{-1}(i + j)$ in $Q$.  
\end{proof}


\begin{remark}\label{r:oversemigrouptokunz}
Composing the maps in Theorem~\ref{t:oversemigroupfaces} and Theorem~\ref{t:kunzfaces} pairs each face $F \subset O_n'$ with a face $F'$ of the Kunz polyhedron $P_n$.  We see that $F$ contains an integer point corresponding to a numerical semigroup if and only if $F'$ does, even though integer points of $F$ do not necessarily get sent to integer points of $F'$ under this composition.

\end{remark}


\begin{example}\label{e:m6oversemigroup}
The cone $O_6 \subset \RR^6$ has 12 extremal rays.  One is the span of $(1, 2, 3, 4, 5, 6)$, as described in Proposition~\ref{p:nonaperyray}.  The remaining 11 rays, whose sum equals $O_n'$, each equal the nonnegative span of one of the following primitive integer vectors:
\begin{center}
$\begin{array}{l@{\qquad}l@{\qquad}l}
\begin{array}{l}
(0, 1, 1, 2, 2, 3)
\\
(0, 0, 1, 1, 1, 2)
\\
(0, 1, 2, 2, 3, 4)
\end{array}
&
\begin{array}{l@{\quad}l}
(0, 0, 0, 0, 0, 1)
&
(0, 1, 2, 3, 4, 5)
\\
(0, 0, 0, 0, 1, 2)
&
(0, 0, 1, 2, 3, 4)
\\
(0, 0, 0, 1, 1, 2)
&
(0, 1, 1, 2, 3, 4).
\end{array}
&
\begin{array}{l}
(0, 0, 1, 1, 2, 3)
\\
(0, 0, 0, 1, 2, 3)
\\
\phantom{}
\end{array}
\end{array}$
\end{center}
Each ray above corresponds to the analogously positioned vector in Example~\ref{e:m6posets} after truncating the initial $0$ coordinate and applying the bijection in Theorem~\ref{t:oversemigroupfaces}.  The only two that contain integer points corresponding to numerical semigroups are those generated by $(0, 0, 0, 0, 0, 1)$ and $(0, 1, 2, 3, 4, 5)$, since these are the only ones with last coordinate relatively prime to $6$.  
\end{example}

Much of the structure highlighted in Example~\ref{e:m6oversemigroup} would not be readily clear without the explicit bijection in Theorem~\ref{t:oversemigroupfaces}, as the Ap\'ery set construction in Proposition~\ref{p:oversemigrouppts} breaks down for points whose last coordinate is not coprime to~$n$.

\section{Leading coefficients of Ehrhart quasipolynomials}
\label{sec:leadingcoeffs}

Recall that $N_m(g)$ equals the number of numerical semigroups with multiplicity $m$ and genus $g$, and $o(n,q)$ equals the number of numerical semigroups containing two relatively prime integers $n$ and $q$.  These functions coincide with quasipolynomials $p_m(g)$ (for $g \gg 0$) and $H_n(q)$ by Theorems~\ref{t:nmgquasi} and~\ref{t:HWthm}, respectively.  

The main result of this section is Theorem~\ref{t:leadingcoefficients}, which expresses the leading coefficients in the quasipolynomials $p_m(g)$ and $H_m(q)$ in terms of an arbitrary triangulation of the group cone $\mathcal{C}(\ZZ_m)$.  Finding an explicit triangulation is still open (Problem~\ref{prob:triangulation}), and will likely require first characterizing the extremal rays of $\mathcal C(\ZZ_m)$.  

Let $\gamma(m)$ denote the leading coefficient of $p_m(g)$.  
In what follows, given a subset~$P$ of Euclidean space whose affine linear span has dimension $d$, the \emph{relative volume} of~$P$, denoted $\vol(P)$, is the $d$-dimensional Euclidean volume of $P$ normalized with respect to the sublattice of the affine span of $P$.  

For each $m \ge 2$, \cite[Theorem~4]{alhajjarkunz} and Proposition~\ref{p:oversemigrouppts} imply the leading cofficients of $p_m(g)$ and $H_m(q)$ are given~by
\begin{align}
\label{eq:gammavolume}
\gamma(m) &= \vol(\mathcal C(\ZZ_m) \cap \{x \in \RR^{m-1} : x_1 + \cdots + x_{m-1} = 1\}) \text{ and}
\\
\label{eq:lambdavolume}
\lambda(m) &= \vol(O_m \cap \{x \in \RR^{m-1} : x_m = 1\}),
\end{align}
respectively, where $\vol(-)$ denotes relative volume and $\|\cdot\|_1$ denotes the $\ell_1$-norm.  
As~stated previously, both~\eqref{eq:gammavolume} and~\eqref{eq:lambdavolume} use Ehrhart's theorem~\cite{ehrhart}.  For a different perspective on the function $o(n,q)$ that exploits a bijection between oversemigroups of $\<n,q\>$ and integer points in a different polyhedron, see~\cite{chek}.




Fix $m \in \ZZ_{\ge 2}$ and a triangulation $\mathcal T$ of $\mathcal C(\ZZ_m)$.  For each simplicial cone $T \in \mathcal T$, write 
$$\begin{array}{r@{}c@{}l}
V(T)
&{}={}& \displaystyle \vol(T \cap \{x \in \RR^{m-1} : x_1 + \cdots + x_{m-1} = 1\}) \\
&{}={}& \displaystyle \frac{1}{(m-2)! \displaystyle \prod_i \|r_i\|_1}\left|\det\begin{pmatrix}
r_{1,1} & \cdots & r_{m-1,1} \\
\vdots & \ddots & \vdots \\
r_{1,m-1} & \cdots & r_{m-1,m-1} \\
\end{pmatrix}\right|
\end{array}$$
for the relative volume of $T \cap \{x \in \RR^{m-1} : x_1 + \cdots + x_{m-1} = 1\}$, 
where $r_1, \ldots, r_{m-1}$ are directional vectors of the rays of $T$.  By \cite[Theorem~4]{alhajjarkunz}, the leading coefficient of $p_m(g)$ equals
\[
\vol\big(\mathcal C(\ZZ_m) \cap \{x \in \RR^{m-1} : x_1 + \cdots + x_{m-1} = 1\}\big) = \sum_{T \in \mathcal T} V(T). 
\]

The leading coefficient $\lambda(m)$ of $H_m(q)$ is the relative volume of 
\[
Q = O_m \cap \{x \in \RR^m : x_m = 1\}.
\]
By Proposition~\ref{p:nonaperyray}, $Q$ is a pyramid over 
\[
Q' = O_m' \cap \{x \in \RR^{m-1} : x_m = 1\}
\]
with height $\tfrac{1}{m}$.  Under the linear map in Theorem~\ref{t:oversemigroupfaces}, the image of $Q'$ in $\mathcal C(\ZZ_m)$ is 
\[
Q'' = \mathcal C(\ZZ_m) \cap \{x_1 = \tfrac{1}{m}\},
\]
and combining factors from each of the above operations, we obtain 
\[
\vol(Q) = \frac{1}{m(m-1)} \vol(Q') = \frac{1}{m(m-1)} \vol(Q'') = \frac{1}{m^{m-1}(m-1)} \vol(mQ'').
\]
The final observation is that for each $T \in \mathcal T$,
we have
\[
V(T) = \vol(T \cap \{x \in \RR^{m-1} : x_1 = 1\}) \prod_{r_i \in T} \frac{r_{i,1}}{\|r_i\|_1}.
\]
Note that, as a consequence of Theorem~\ref{t:facetdesc}, every nonzero vector on a ray of $\mathcal C(\ZZ_m)$ has nonzero first coordinate.  
This proves the following.  

\begin{thm}\label{t:leadingcoefficients}
The leading coefficient of the quasipolynomial $p_m(g)$ is 
\[
\gamma(m) = \sum_{T \in \mathcal T} V(T),
\]
and the leading coefficient of the quasipolynomial $H_m(q)$ is 
\[
\lambda(m) = \frac{1}{m^{m-1}(m-1)} \sum_{T \in \mathcal T} V(T) \prod_{r_i \in T} \frac{\|r_i\|_1}{r_{i,1}}.
\]
\end{thm}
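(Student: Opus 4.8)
The plan is to assemble the statement from the chain of identifications already established, tracking the volume-normalization factors carefully at each step. For the first formula, Theorem~\ref{t:leadingcoefvolumes} identifies $\gamma(m)$ with the relative volume of the cross section $\mathcal C(\ZZ_m) \cap \{x_1 + \cdots + x_{m-1} = 1\}$, and since $\mathcal T$ is a triangulation, this cross section decomposes as a union (meeting only in lower-dimensional faces) of the pieces $T \cap \{x_1+\cdots+x_{m-1}=1\}$, one for each simplicial cone $T \in \mathcal T$. Relative volume is additive over such a decomposition, so $\gamma(m) = \sum_{T \in \mathcal T} V(T)$; the closed-form determinant expression for $V(T)$ in terms of the ray directions $r_i$ is the standard formula for the relative volume of a simplex, and I would simply cite it rather than rederive it.

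For the second formula, I would run the reduction displayed just before the theorem statement. Starting from $\lambda(m) = \vol(O_m \cap \{x_m = 1\}) = \vol(Q)$, Proposition~\ref{p:nonaperyray} realizes $Q$ as a pyramid of height $\tfrac1m$ over $Q' = O_m' \cap \{x_m = 1\}$, contributing a factor $\tfrac{1}{m(m-1)}$ (height $\tfrac1m$ times the $\tfrac{1}{m-1}$ from the pyramid formula in dimension $m-1$). The linear map of Theorem~\ref{t:oversemigroupfaces} carries $Q'$ to $Q'' = \mathcal C(\ZZ_m) \cap \{x_1 = \tfrac1m\}$; this map has determinant $\pm 1$ on the relevant lattices (it is triangular with unit diagonal after clearing the $\tfrac1n$'s appropriately), so it preserves relative volume, and then rescaling by $m$ turns $Q''$ into $\mathcal C(\ZZ_m) \cap \{x_1 = 1\}$ at the cost of $m^{m-1}$ (the ambient dimension is $m-1$), giving $\vol(Q) = \tfrac{1}{m^{m-1}(m-1)} \vol\big(\mathcal C(\ZZ_m) \cap \{x_1=1\}\big)$. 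Triangulating this cross section by $\mathcal T$ again, $\vol\big(\mathcal C(\ZZ_m) \cap \{x_1=1\}\big) = \sum_{T} \vol(T \cap \{x_1=1\})$, and the final identity $V(T) = \vol(T \cap \{x_1=1\}) \prod_{r_i \in T} \tfrac{r_{i,1}}{\|r_i\|_1}$ lets me substitute $\vol(T \cap \{x_1=1\}) = V(T) \prod_{r_i \in T} \tfrac{\|r_i\|_1}{r_{i,1}}$. This last equation is the one substantive computation: it compares the relative volumes of the two cross sections $\{x_1+\cdots+x_{m-1}=1\}$ and $\{x_1=1\}$ of the same simplicial cone $T$, and follows because scaling the ray generator $r_i$ so that its first coordinate (resp. its $\ell^1$-norm) is $1$ introduces exactly the factor $\tfrac{1}{r_{i,1}}$ (resp. $\tfrac{1}{\|r_i\|_1}$), with the two simplices then spanning the same affine lattice.

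The only genuine subtlety — and the place I would be most careful — is the bookkeeping of lattice normalizations in ``relative volume'': each linear map and each rescaling must be checked to either preserve the lattice or to scale it by a known factor, and the pyramid-height computation in Proposition~\ref{p:nonaperyray} must be matched to the correct power of $m$. The observation that every ray of $\mathcal C(\ZZ_m)$ has nonzero first coordinate (noted in the text as a consequence of Theorem~\ref{t:facetdesc}, since otherwise the Kunz subgroup would be nontrivial yet still yield a face on which $x_1 \equiv 0$ — more precisely, a ray with $x_1 = 0$ would force, via the defining inequalities, a coordinate identically zero in a way incompatible with being extremal) is what guarantees the products $\prod_{r_i \in T} \tfrac{\|r_i\|_1}{r_{i,1}}$ are well-defined, so I would state that explicitly before writing the final sum. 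Assembling these pieces yields exactly the two displayed formulas, completing the proof.
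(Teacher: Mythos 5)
Your proposal is correct and takes essentially the same route as the paper: additivity of relative volume over the triangulation for $p_m(g)$, and for $H_m(q)$ the pyramid decomposition of Proposition~\ref{p:nonaperyray}, the volume-preserving map of Theorem~\ref{t:oversemigroupfaces} onto $Q'' = \mathcal C(\ZZ_m) \cap \{x_1 = \tfrac{1}{m}\}$, the dilation to $mQ''$, and the cross-section comparison $V(T) = \vol(T \cap \{x_1 = 1\}) \prod_{r_i \in T} \tfrac{r_{i,1}}{\|r_i\|_1}$, together with the observation that rays of $\mathcal C(\ZZ_m)$ have nonzero first coordinate. One small bookkeeping correction: dilating the $(m-2)$-dimensional set $Q''$ by $m$ costs $m^{m-2}$ (not $m^{m-1}$); the remaining factor of $m$ comes from the pyramid height $\tfrac{1}{m}$, and your stated final constant $\tfrac{1}{m^{m-1}(m-1)}$ is indeed the correct one.
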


\begin{example}\label{e:leadingcoefficients}
Let $m = 4$.  One triangulation of $\mathcal C(\ZZ_4)$ consists of the cones
$$\begin{array}{r@{}c@{}l}
T_1 &{}={}& \RR_{\ge 0}(1,0,1) + \RR_{\ge 0}(1,2,1) + \RR_{\ge 0}(1,2,3) \qquad \text{and} \\
T_2 &{}={}& \RR_{\ge 0}(1,0,1) + \RR_{\ge 0}(1,2,1) + \RR_{\ge 0}(3,2,1),
\end{array}$$
which have relative volumes (as defined above)
$$
V(T_1) = \tfrac{1}{96}\left|\det\begin{pmatrix}
1 & 1 & 1 \\
0 & 2 & 2 \\
1 & 1 & 3 \\
\end{pmatrix}\right| = \tfrac{1}{24}
\qquad \text{and} \qquad
V(T_2) = \tfrac{1}{96}\left|\det\begin{pmatrix}
1 & 1 & 3 \\
0 & 2 & 2 \\
1 & 1 & 1 \\
\end{pmatrix}\right| = \tfrac{1}{24}.$$
As such, the leading coefficient of $p_4(g)$ is
$$V(T_1) + V(T_2) = \tfrac{1}{12}$$
and the leading coefficient of the $H_4(q)$ is
\[
\tfrac{1}{192}(48V(T_1) + 16V(T_2)) = \tfrac{1}{72},
\]
which agree with the computations in \cite{alhajjarkunz} and \cite{oversemigroupcone}, respectively.  
\end{example}

\begin{prob}\label{prob:triangulation}
For each finite abelian group $G$, find a triangulation of $\mathcal C(G)$.  
\end{prob}

\section{Computing the Ap\'ery set of a numerical semigroup}
\label{sec:aperycompute}

In the process of writing this paper, an improved implementation of the Ap\'ery set function was written for the 
\texttt{GAP} package \texttt{numericalsgps}~\cite{numericalsgpsgap}.  The original implementation, based on the circle-of-lights algorithm proposed by Wilf~\cite{wilfconjecture}, enumerates each positive integer, beginning at the multiplicity $m$, and stops when all $m-1$ positive elements $a_1, \ldots, a_{m-1}$ of the Ap\'ery set have been obtained.  The key idea is that one only needs to enumerate within equivalence classes modulo $m$ for which $a_i$ has not yet been found, and checking if a given integer $n \equiv i \bmod m$ lies in $\Ap(S,m)$ can be done by checking if $n = a_{j} + a_{i-j}$ for some previously obtained elements $a_j, a_{j-i} \in \Ap(S,m)$.  In~this sense, the circle-of-lights algorithm is in fact computing the Ap\'ery poset of $S$.  

Algorithm~\ref{a:aperyset}, in contrast, walks up the Kunz poset instead of the Ap\'ery poset.  The~Ap\'ery set elements are obtained starting with the bottom of the Kunz poset and using Proposition~\ref{p:minelements}(b) to check potential cover relations above each new element.  New elements are traversed in the order in which they are encountered, using a queue (first-in-first-out) data structure, and only the smallest element in each equivalence class modulo $m$ is retained (denoted as $a(0), \ldots, a(m-1)$ in the algorithm).  

The resulting implementation is particularly effective for numerical semigroups with
\begin{enumerate}[(i)]
\item 
``small'' embedding dimension, or

\item 
some generators that are much larger than the multiplicity (i.e., those represented by ``large points'' in the Kunz polyhedron), 

\end{enumerate}
as such semigroups can have long sequences of successive integers outside of $\Ap(S,m)$.  
We ran calculations for large numbers of randomly chosen numerical semigroups and include a representative sample comparing the runtimes of Algorithm~\ref{a:aperyset} and the original implementation in Table~\ref{tb:aperysetruntimes}.  

\begin{alg}\label{a:aperyset}
Computes the Ap\'ery set of a numerical semigroup from its generators.
\begin{algorithmic}
\Function{AperySetOfNumericalSemigroup}{$m, n_1, \ldots, n_k$}
\State Initialize a queue $Q \hookleftarrow 0$
\State $a(0) \gets 0$ and $a(i) \gets \infty$ for each $i = 1, \ldots, m-1$
\While{$|Q| > 0$}
	\State Dequeue $n \hookleftarrow Q$, disregarding any with $n > a(n \bmod m)$
	\ForAll{$g = n_1, \ldots, n_k$}
		\If{$n + g < a((n + g) \bmod m)$}
			\State $a((n + g) \bmod m) \gets n + g$
			\State Enqueue $Q \hookleftarrow n + g$
		\EndIf
	\EndFor
\EndWhile
\State \Return $\{0, a(1), \ldots, a(m-1)\}$
\EndFunction
\end{algorithmic}
\end{alg}

\begin{remark}\label{r:posetseverywhere}
Algorithm~\ref{a:aperyset} does not make any reference to the Ap\'ery or Kunz posets, but the underlying idea uses this poset structure.  This appears to be relatively common in the numerical semigroup literature, where other results utilize this additional structure without referring to it explicitly.  
\end{remark}

\begin{table}
\begin{tabular}{l|l|l}
$S$ & \texttt{GAP} \cite{numericalsgpsgap} & Algorithm~\ref{a:aperyset} \\
\hline
$\<1000, 1001\>$      & 90 ms   & 0 ms    \\
$\<10000, 10001\>$      & 6720 ms   & 10 ms    \\
$\<27143, 30949, 35207\>$      & 52250 ms   & 40 ms    \\
$\<50632, 225750, 249397, 468508\>$      & 176480 ms   & 140 ms    \\
\end{tabular}
\medskip
\caption{Runtimes for Ap\'ery set computations, each using \texttt{GAP} and the package \texttt{numericalsgps}~\cite{numericalsgpsgap}.}
\label{tb:aperysetruntimes}
\end{table}

\section*{Acknowledgements}

The authors would like to thank Jackson Autry, Winfried Bruns, Jes\'us De Loera, Hayan Nam, Sherilyn Tamagawa, and Dane Wilburne for numerous helpful conversations, as well as the anonymous referees for their helpful comments.  
The first author is supported by NSF Grant DMS-1802281.



\begin{table}
\begin{tabular}{l|l|l|l|l|l|}
$G$ & Leading coefficient            & Next coefficient              & Period   \\ 
\hline
$\ZZ_3$   & $1/3$                                                & -                                                       & $3$                                            \\[0.4em] 
$\ZZ_4$   & $1/12$                                               & -                                                       & $12$                                           \\[0.4em] 
$\ZZ_5$   & $1/135$                                              & $2/45$                                                  & $30$                                           \\[0.4em] 
$\ZZ_6$   & $71/(2^4 \cdot 3^6 \cdot 7)$                         & $(5\cdot 71)/(2^3\cdot 3^6 \cdot 7)$                    & $2^2 \cdot 3^2 \cdot 5 \cdot 7$                \\[0.4em] 
$\ZZ_7$   & $(23 \cdot 71)/(2^9 \cdot 3^4 \cdot 5^3 \cdot 7)$    & $(23\cdot 71)/(2^9\cdot 3^3\cdot 5^2\cdot 7)$           & $2^2\cdot 3\cdot 5\cdot 7$                     \\[0.4em] 
$\ZZ_8$   & $(113 \cdot 108461)/$                                & $(113 \cdot 108461)/$                                   & $2^4 \cdot 3^2 \cdot 5\cdot 7\cdot 11\cdot 13$ \\ 
     & $(2^{11} \cdot 3^7 \cdot 5^3 \cdot 7^2 \cdot 11\cdot 13)$ & $(2^{11} \cdot 3^6 \cdot 5^3 \cdot 7 \cdot 11\cdot 13)$ &                                                \\[0.4em]
$\ZZ_2^2$ & $1/8$                                                & -                                                       & $2$                                            \\[0.4em] 
$\ZZ_2^3$ & $1/(2^{11} \cdot 3 \cdot 5^2)$                       & $7 / (2^{11} \cdot 5^2)$                                & $20$                                           \\[0.4em] 
$\ZZ_3^2$ & $3001/(2^4 \cdot 3^9 \cdot 5^3 \cdot 7^2 \cdot 11)$  & $3001 / (2^2 \cdot 3^9 \cdot 5^3 \cdot 7 \cdot 11)$     & $3^3 \cdot 5 \cdot 7 \cdot 11$                 \\[0.4em] 
$\ZZ_4 \times \ZZ_2$ & $479 / (2^{11} \cdot 3^7 \cdot 5^2)$      & $(7 \cdot 479) / (2^{11} \cdot 3^6 \cdot 5^2)$          & $2^4 \cdot 3^2 \cdot 5$                        \\[0.4em] 

\end{tabular}
\medskip
\caption{Data for $L_m(g)$, obtained using \texttt{Normaliz} \cite{normaliz3}.}
\label{tb:groupconequasidata}
\end{table}

\begin{table}
\begin{tabular}{l|l|l|l|l|l|}
$m$ & Leading coefficient   & Next coefficient     & Period & Initial $g$ \\ 
\hline
$3$ & $1/3$           & -              & $3$    & $2$         \\[0.4em] 
$4$ & $1/12$          & $1/2$          & $6$    & $4$         \\[0.4em] 
$5$ & $1/135$         & $4/45$         & $30$   & $7$         \\[0.4em] 
$6$ & $71/(2^4\cdot 3^6 \cdot 7)$      & $(5\cdot 71)/(2^2 \cdot 3^6 \cdot 7)$    & $2^2 \cdot 3^2 \cdot 5 \cdot 7$ & $11$        \\[0.4em] 
$7$ & $(23\cdot 71)/(2^9 \cdot 3^4 \cdot 5^3 \cdot 7)$ & $(23\cdot 71)/(2^8\cdot 3^3 \cdot 5^2 \cdot 7)$ & $2^2 \cdot 3\cdot 5\cdot 7$  & $16$        \\ 
\end{tabular}
\medskip
\caption{Data for $p_m(g)$, obtained using \texttt{Normaliz} \cite{normaliz3}.}
\label{tb:kunzpolyhedronquasidata}
\end{table}

\begin{table}
\begin{tabular}{l|l|l|l|l|l|}
$m$ & Leading coefficient                   & Next coefficient                    & Period    \\ 
\hline
$3$  & $1/12$                          & $1/2$                         & $3$       \\[0.4em] 

$4$  & $1/72$                          & $1/6$                         & $6$       \\[0.4em] 

$5$  & $13/(2^6 \cdot 3^3 \cdot 5)$                       & $13/(2^4\cdot 3^3)$                      & $30$      \\[0.4em] 

$6$  & $59/(2^9\cdot 3^3 \cdot 5^2)$                     & $59/(2^8\cdot 3^2 \cdot 5)$                    & $60$      \\[0.4em] 

$7$  & $231349/(2^{13}\cdot 3^7 \cdot 5^3 \cdot 7)$            & $231349/(2^{12}\cdot 3^6 \cdot 5^3)$            & $2^3 \cdot 3^2 \cdot 5\cdot 7$    \\[0.4em] 

$8$  & $(11 \cdot 29 \cdot 383)/(2^{14} \cdot 3^3 \cdot 5^4 \cdot 7^3)$            & $(11 \cdot 29 \cdot 383)/(2^{11} \cdot 3^3 \cdot 5^4 \cdot 7^2)$           & $2^3 \cdot 3^2 \cdot 5 \cdot 7$    \\[0.4em] 

$9$  & $(115837\cdot30622157)/$                & $(115837\cdot30622157)/$              & $2^4 \cdot 3^2 \cdot 5\cdot 7 \cdot 11$   \\ 
     & $(2^{21} \cdot 3^9 \cdot 5^5 \cdot 7^4 \cdot 11^2)$          & $(2^{18} \cdot 3^7 \cdot 5^5 \cdot 7^4 \cdot 11^2)$          &           \\[0.4em]

$10$ & $(1321 \cdot 58869143 \cdot 1493426677)/$        & $(1321 \cdot 58869143 \cdot 1493426677)/$      & $2^4 \cdot 3^3 \cdot 5\cdot 7 \cdot 11\cdot 13$ \\ 

     & $(2^{25} \cdot 3^{16} \cdot 5^5 \cdot 7^5 \cdot 11^3 \cdot 13^2)$ & $(2^{24} \cdot 3^{14} \cdot 5^4 \cdot 7^5 \cdot 11^3 \cdot 13^2)$ &   \\
\end{tabular}
\medskip
\caption{Data for $H_m(q)$, obtained using \texttt{Normaliz} \cite{normaliz3}.}
\label{tb:oversemigroupconequasidata}
\end{table}

\section*{Appendix:\ Quasipolynomial data}
\label{sec:quasidata}

The number of integer points in the intersection of the group cone $\mathcal{C}(G)$ with the hyperplane $\sum_{i=1}^{|G|-1} x_i = g$ is given by a quasipolynomial $L_G(g)$ of degree $|G| - 2$.  It is known that the leading coefficient of $L_G(g)$ equals the (relative) volume of the intersection $P$ of $\mathcal C(\ZZ_m)$ with $\sum_{i=1}^{|G|-1} x_i = 1$, and the next coefficient (when it is constant) equals half the (relative) surface area of $P$.  
Tables~\ref{tb:groupconequasidata} and~\ref{tb:kunzpolyhedronquasidata} give coefficient and period data on $L_G(g)$ and $p_m(g)$, respectively.  Note that it is proven in~\cite{alhajjarkunz} that the leading coefficient of $L_G(g)$ with $G = \ZZ_m$ equals the leading coefficient of $p_m(g)$.  We also include in the latter table the initial value $N$ of $g$ for which $p_m(g)$ coincides with $N_m(g)$ for all $g \ge N$.  


On the other hand, $H_m(q)$ is a quasipolynomial of degree $m-1$ whose constant leading coefficient is described geometrically in Theorem~\ref{t:leadingcoefficients}.  If $\gcd(m,q) = 1$, $H_m(q)$ coincides with $o(m,q)$, the number of oversemigroups of $\<m, q\>$.  Table~\ref{tb:oversemigroupconequasidata} gives the top two coefficients and period of $H_m(g)$.  
It is important to note that the period of $H_m(q)$ (on all values) may be strictly larger than that of $o(m,q)$ (considered only when $\gcd(m,q) = 1$).  


Examining the data in Tables~\ref{tb:groupconequasidata} and~\ref{tb:kunzpolyhedronquasidata} suggests the following.  

\begin{conj}\label{conj:tabledata}
The following hold.  
\begin{enumerate}[(a)]
\item 
For each $G$ with $|G| \ge 5$, the coefficients of the two highest degree terms of $L_G(g)$ are constant, and their quotient equals $\binom{|G|-1}{2}$.  

\item 
For each $m \ge 4$, the coefficients of the two highest degree terms of $p_m(g)$ are constant, and their quotient equals $2\binom{m-1}{2}$.  

\item 
For each $m \ge 3$, $p_m(g) = N_m(g)$ for $g > \binom{m}{2}$, but $p_m(g) \ne N_m(g)$ for $g = \binom{m}{2}$.  

\end{enumerate}
\end{conj}

\end{document}